\newtheoremstyle{myremark} 
    {7pt}                    
    {7pt}                    
    {}  	                 
    {}                           
    {\bf}       	         
    {.}                          
    {.5em}                       
    {}  
\theoremstyle{plain}
\newtheorem{lemma}{Lemma}[section]
\newtheorem{theorem}[lemma]{Theorem}
\newtheorem*{theorem-main}{Theorem~\ref{thm:main}}
\newtheorem{definition}[lemma]{Definition}
\newtheorem{proposition}[lemma]{Proposition}
\theoremstyle{definition}
\theoremstyle{myremark}
\newcommand{\N}{\mathbb{N}}
\newcommand{\R}{\mathbb{R}}
\newcommand{\sphere}{\mathbb{S}}
\newcommand{\diam}{\mathrm{diam}}
\newcommand{\cost}{\mathrm{cost}}
\newcommand{\conv}{\mathrm{conv}}
\newcommand{\cP}{\mathcal{P}}
\newcommand{\ball}[2]{\mathrm{B}(#1,#2)}
\newcommand{\cball}[2]{\overline{\mathrm{B}}(#1,#2)}
\newcommand{\inj}{\hookrightarrow}     
\newcommand{\suchthat}{~\middle|~}
\newcommand{\vr}[2]{\mathrm{VR}(#1;#2)}
\newcommand{\vrleq}[2]{\mathrm{VR}_\leq(#1;#2)}
\newcommand{\vrless}[2]{\mathrm{VR}_<(#1;#2)}
\newcommand{\vrm}[2]{\mathrm{VR}^m(#1;#2)}
\newcommand{\vrmleq}[2]{\mathrm{VR}^m_\leq(#1;#2)}
\newcommand{\vrmless}[2]{\mathrm{VR}^m_<(#1;#2)}
\newcommand{\cech}[2]{\mathrm{\check{C}}(#1;#2)}
\newcommand{\cecha}[3]{\mathrm{\check{C}}(#1,#2;#3)}
\newcommand{\cechaleq}[3]{\mathrm{\check{C}_{\le}}(#1,#2;#3)}
\newcommand{\cechaless}[3]{\mathrm{\check{C}_{<}}(#1,#2;#3)}
\newcommand{\cechm}[2]{\mathrm{\check{C}^m}(#1;#2)}
\newcommand{\cecham}[3]{\mathrm{\check{C}^m}(#1,#2;#3)}
\newcommand{\cechamleq}[3]{\mathrm{\check{C}_\leq^m}(#1,#2;#3)}
\newcommand{\id}{\mathrm{id}}
\newcommand{\Tub}{\mathrm{Tub}}
\newcommand{\comp}{\circ}
\newcommand{\supp}{\mathrm{supp}}
\begin{document}

\title{Metric Thickenings of Euclidean Submanifolds}
\author{Henry Adams}
\address{Department of Mathematics, Colorado State University, Fort Collins, CO 80523, United States}
\email{adams@math.colostate.edu}
\author{Joshua Mirth}
\address{Department of Mathematics, Colorado State University, Fort Collins, CO 80523, United States}
\email{mirth@math.colostate.edu}
\date{\today}
\subjclass[2010]{53C23, 
54E35, 
55P10, 
55U10}
\keywords{Vietoris--Rips complex, \v{C}ech complex, metric thickening, metric reconstruction, Euclidean submanifold, positive reach, homotopy, clique complex, nerve lemma, persistent homology.}

\begin{abstract}
Given a sample $Y$ from an unknown manifold $X$ embedded in Euclidean space, it is possible to recover the homology groups of $X$ by building a Vietoris--Rips or \v{C}ech simplicial complex on top of the vertex set $Y$.
However, these simplicial complexes need not inherit the metric structure of the manifold, in particular when $Y$ is infinite. Indeed, a simplicial complex is not even metrizable if it is not locally finite.
We instead consider metric thickenings, called the \emph{Vietoris--Rips} and \emph{\v{C}ech thickenings}, which are equipped with the 1-Wasserstein metric in place of the simplicial complex topology.
We show that for Euclidean subsets $X$ with positive reach, the thickenings satisfy metric analogues of Hausmann's theorem and the nerve lemma (the metric Vietoris--Rips and \v{C}ech thickenings of $X$ are homotopy equivalent to $X$ for scale parameters less than the reach).
To our knowledge this is the first version of Hausmann's theorem for Euclidean submanifolds (as opposed to Riemannian manifolds), and our result also extends to non-manifold shapes (as not all sets of positive reach are manifolds).
In contrast to Hausmann's original proof, our homotopy equivalence is a deformation retraction, is realized by canonical maps in both directions, and furthermore can be proven to be a homotopy equivalence via simple linear homotopies from the map compositions to the corresponding identity maps.

\end{abstract}
\maketitle

\section{Introduction}

The Vietoris--Rips simplicial complex $\vr{X}{r}$ of a metric space $X$ at scale parameter $r > 0$ has $X$ as its vertex set, and a simplex $\sigma$ for every finite set of points of diameter less than $r$.
Vietoris--Rips complexes are a natural way to enlarge a metric space. 
Indeed, Hausmann proves in~\cite{Hausmann} that given a compact Riemannian manifold $X$ and a sufficiently small scale parameter $r$, the Vietoris--Rips complex $\vr{X}{r}$ is homotopy equivalent to $X$.
In response to a question in Hausmann's paper, Latschev~\cite{Latschev} proves furthermore that if $Y \subseteq X$ is a sufficiently dense sample, then $\vr{Y}{r}$ is also homotopy equivalent to $X$.
Latschev's result is a precursor to many theoretical guarantees~\cite{AFV,AttaliLieutier,ChazalDeSilvaOudot2013,ChazalOudot2008,NiyogiSmaleWeinberger} showing how Vietoris--Rips complexes and related constructions can recover topological information about a shape $X$ from a sufficiently dense sampling $Y$.
In applications of topology to data analysis~\cite{Carlsson2009,EdelsbrunnerHarer} the datasets will typically be finite, but nevertheless infinite Vietoris--Rips constructions are important for applications in part because if a dataset $Y$ converges to an infinite shape $X$ in the Gromov--Hausdorff distance, then the persistent homology of $\vr{Y}{r}$ converges to that of the infinite object $\vr{X}{r}$~\cite{ChazalDeSilvaOudot2013}.

Despite theoretical guarantees such as Hausmann's theorem, the simplicial complex $\vr{X}{r}$ does not retain the metric properties of $X$.
In fact, a simplicial complex is metrizable if and only if it is locally finite, which $\vr{X}{r}$ need not be when $X$ is infinite.
Furthermore, if $X$ is not discrete then the natural inclusion map $X \inj \vr{X}{r}$ is not continuous for any $r>0$. 
The Vietoris--Rips thickening of $X$, denoted $\vrm{X}{r}$ and introduced in~\cite{MetricReconstructionViaOptimalTransport}, addresses each of these issues.
As a set, $\vrm{X}{r}$ is naturally identified with the geometric realization of simplicial complex $\vr{X}{r}$, but it has a completely different topology induced from the 1-Wasserstein metric.
Indeed, the space $\vrm{X}{r}$ is a \emph{metric thickening} of $X$, meaning that it is a metric space extending the metric on $X$.
As a result, the inclusion $X \inj \vrm{X}{r}$ is continuous for all metric spaces $X$ and scale parameters $r$.
In general, the simplicial complex $\vr{X}{r}$ and metric thickening $\vrm{X}{r}$ are neither homeomorphic nor homotopy equivalent, and we argue that the metric thickening is often a more natural object.

In particular, let $X$ be a compact Riemannian manifold. If $X$ is of dimension at least one, then the inclusion $X\inj \vr{X}{r}$ is not continuous. For $r$ sufficiently small, the homotopy equivalence $\vr{X}{r}\xrightarrow{\simeq}X$ in Hausmann's result depends on the choice of a total ordering\footnote{One could use the axiom of choice to pick such a total order, though constructive total orders may also exist.} 
of the points in $X$, meaning it is non-canonical as different choices of orderings produce different maps.
By contrast, the inclusion $X \inj \vrm{X}{r}$ to the metric thickening is continuous, and for $r$ sufficiently small it has as a homotopy inverse the canonical map $\vrm{X}{r} \to X$ defined by Karcher means~\cite{MetricReconstructionViaOptimalTransport,Karcher1977}.

In this work we prove a metric analogue of Hausmann's result for subsets of Euclidean space with positive reach. Our main result is the following:

\begin{theorem-main}
Let $X$ be a subset of Euclidean space $\R^n$, equipped with the Euclidean metric, and suppose the reach $\tau$ of $X$ is positive. Then for all $r < \tau$, the metric Vietoris--Rips complex $\vrm{X}{r}$ is homotopy equivalent to $X$.
\end{theorem-main}

In particular, if $X$ is a submanifold of $\R^n$ with positive reach, then its Vietoris--Rips thickening is homotopy equivalent to $X$ for sufficiently small scale parameters.
To our knowledge, this is the first version of Hausmann's theorem for Euclidean (and hence typically non-Riemannian) manifolds, using either Vietoris--Rips complexes or Vietoris--Rips thickenings.
However, our theorem does not require $X$ to be a manifold, and indeed not every set of positive reach is a manifold~\cite{FedererCurvature,rataj2017structure}.

We prove the main theorem by showing that the linear projection of $\vrm{X}{r}$ into $\R^n$ has image contained in the tubular neighborhood of radius $\tau$ about $X$.
We then map each point in the tubular neighborhood to its unique closest point in $X$.
The composition of these maps produces a homotopy equivalence $\vrm{X}{r}\xrightarrow{\simeq} X$ whose homotopy inverse is the (now continuous) inclusion $X\inj \vrm{X}{r}$.

We provide the following motivation for our work.
Given a sample $Y$ from an unknown shape $X$, there are theoretical guarantees~\cite{AFV,AttaliLieutier,ChazalDeSilvaOudot2013,ChazalOudot2008,Latschev,NiyogiSmaleWeinberger} which recover topological information about $X$ by building a Vietoris--Rips complex or a related construction on top of $Y$. 
However, many of these theorems require choosing a scale parameter (such as $r$ in $\vr{Y}{r}$) which is sufficiently small depending on the curvature of $X$, and hence it may be difficult to choose an appropriate $r$ without prior knowledge of $X$.
Instead, practitioners often compute the persistent homology of $\vr{Y}{r}$ over a range of scale parameters $r$ in order to learn about $X$.
This is well-motivated since as a dataset $Y$ converges to an infinite shape $X$ in the Gromov--Hausdorff distance, stability implies that the persistent homology of $\vr{Y}{r}$ converges to that of $\vr{X}{r}$~\cite{ChazalDeSilvaOudot2013}.
Nevertheless, very little is known about the theory of Vietoris--Rips complexes of infinite shapes (such as manifolds) at larger scale parameters $r$.
This is in part due to the difficulty of working with the simplicial complex $\vr{X}{r}$, which may not be metrizable even though $X$ is a metric space, and for which the natural inclusion map $X\hookrightarrow \vr{X}{r}$ is often not continuous.
We believe that the metric Vietoris--Rips thickening $\vrm{X}{r}$ is often a more natural object to consider when $X$ is infinite.
As evidence of this claim, we provide a metric analogue of Hausmann's theorem ($\vrm{X}{r}\simeq X$ for $r$ sufficiently small) when $X$ is a Euclidean subset of positive reach.
Our results are relevant for data analysis since a finite dataset $Y$ may converge (as more samples are drawn) in the Gromov--Hausdorff distance to an underlying Euclidean set $X$, and since one may want to recover not only the homotopy type or persistent homology of $X$ but also its metric properties.

An important clarifying remark about metrics on Euclidean submanifolds is the following.
Let $X$ be a manifold embedded in Euclidean space $\R^n$ and equipped with the Euclidean submetric (as in Theorem~\ref{thm:main}).
A different metric on $X$ is the Riemannian structure inherited from the usual inner product in $\R^n$---assuming $X$ is connected, the Riemannian distance on $X$ is also a metrization of the original manifold topology~\cite{Lee}. 
The Vietoris--Rips complex, and its homotopy type, may depend upon 
which of these two metrics one chooses to use.
For example, a circle and an ellipse in $\R^2$ with the Riemannian distance function (i.e.\ the arc-length metric) and equal circumferences have identical Vietoris--Rips complexes.
On the other hand, with the Euclidean metrics their Vietoris--Rips complexes are not homotopy equivalent at all scale parameters~\cite{AA-VRS1,AAR}.
The Vietoris--Rips thickening $\vrm{X}{r}$ will similarly depend on the choice of metric on $X$; in this paper we consider the Euclidean metric.

A related construction to the Vietoris--Rips complex is the \v{C}ech complex.
For $X\subseteq\R^n$, the \v{C}ech complex $\cech{X}{r}$ is the nerve simplicial complex of the collection of balls $\ball{x}{r/2}$ with centers $x\in X$. 
The nerve lemma implies that $\cech{X}{r}$ is homotopy equivalent to the union of the balls.\footnote{Here we mean ambient \v{C}ech complexes corresponding to Euclidean balls, though in this paper we also consider intrinsic \v{C}ech complexes corresponding to possibly non-contractible balls in $X$.}
However, the \v{C}ech complex $\cech{X}{r}$ need not inherit any metric properties of $X$, and again is not metrizable if it is not locally finite.
We therefore consider the metric \v{C}ech thickening $\cechm{X}{r}$ from~\cite{MetricReconstructionViaOptimalTransport}, which is a metric space equipped with the 1-Wasserstein distance that furthermore is a metric thickening of $X$.
In Theorem~\ref{thm:cech} we prove that if $X$ is a subset of Euclidean space of positive reach $\tau$, then for all $r<\tau$ the metric \v{C}ech thickening $\cech{X}{2r}$ is homotopy equivalent to $X$.\footnote{This result does not follow from the nerve lemma since the nerve complex $\cech{X}{r}$ and metric thickening $\cechm{X}{r}$ can in general have different homotopy types. The theorem holds for both ambient and intrinsic \v{C}ech thickenings.}
The proof mirrors that of the main theorem, Theorem~\ref{thm:main}.

In Section~\ref{sec:prelims} we review notation and we introduce the Wasserstein metric, Vietoris--Rips and \v{C}ech simplicial complexes, Vietoris--Rips and \v{C}ech metric thickenings, Euclidean subsets of positive reach, and tubular neighborhoods.
Section~\ref{sec:Euclidean-Hausmann} contains our main result, a metric analogue of Hausmann's theorem for Vietoris--Rips thickenings of Euclidean subsets of positive reach, and the lemmas building up to it.
We use similar techniques to prove a version for \v{C}ech thickenings in Section~\ref{sec:Euclidean-nerve}.

\section{Preliminaries}\label{sec:prelims}

We describe background material and notation on metric spaces, Euclidean space, topological spaces~\cite{hatcher2002algebraic}, the Wasserstein metric~\cite{vershik2013long,edwards2011kantorovich}, simplicial complexes, Vietoris--Rips and \v{C}ech simplicial complexes~\cite{EdelsbrunnerHarer}, Vietoris--Rips and \v{C}ech metric thickenings~\cite{MetricReconstructionViaOptimalTransport}, and sets of positive reach~\cite{FedererCurvature}.

\subsection{Metric spaces}
A \emph{metric space}, $(X,d)$, is a set $X$ along with a function $d \colon X \times X \to \R$ such that for all $u,v,w \in X$,
\begin{itemize}
\item $d(u,v) \ge 0$ and $d(u,v) = 0$ if and only if $u = v$,
\item $d(u,v) = d(v,u)$, and
\item $d(u,v) \le d(u,w) + d(w,v)$.
\end{itemize}
The function $d$ is called a \emph{distance} or a \emph{metric}.
We will denote open balls in $X$ by $\ball{x}{r}=\{y\in X~|~d(y,x)<r\}$, where $x \in X$ is the ball's center and $r$ is its radius.
Likewise, a closed ball will be denoted $\cball{x}{r}=\{y\in X~|~d(y,x)\le r\}$.

Given a point $x\in X$ and subset $Y\subseteq X$, we define the distance between $x$ and $Y$ to be $d(x,Y)=\inf_{y\in Y}d(x,y)$. The distance between two subsets $Y,Y'\subseteq X$ is $d(Y,Y')=\inf_{y\in Y,y'\in Y'}d(y,y')$. An $r$-\emph{thickening} of a metric space $X$ is a metric space $Z \supseteq X$ such that the metric on $X$ extends to that on $Z$, and also $d(x,Z)\le r$ for all $x\in X$. We define the diameter of a set $Y \subseteq X$ to be $\diam(Y) = \sup\{d(y,y')~|~y,y'\in Y\}$.

Any metric $d$ induces a topology on the set $X$ called the metric topology.
The basis for this topology consists of all open balls of positive radius.
A topological space $X$ is \emph{metrizable} if there exists a metric $d\colon X \times X\to \R$ that induces the topology of $X$.

\subsection{Euclidean Space}
Euclidean space is the metric space $(\R^n,d)$ where $d$ is the usual Euclidean distance.
There is a standard inner product $\langle \cdot , \cdot \rangle$ on $\R^n$ defined by
\[
\langle (u_1,\ldots,u_n) , (v_1,\ldots,v_n) \rangle = u_1v_1+\ldots+u_n v_n.
\]
We can define the \emph{norm}, $\|\cdot \|$, of an element $x \in \R^n$ by $\|x\| = \langle x , x \rangle^{1/2}$.
The metric $d$ is then simply $d(u,v)=\|u - v\|$.

For any $X \subseteq \R^n$ define the convex hull of $X$ as
\[\conv(X) = \Bigl\{ x=\sum_{i = 0}^k \lambda_i x_i \in \R^n~\Big|~ k\in\N,\ \lambda_i \geq 0,\ \sum_{i} \lambda_i = 1,\ x_i\in X \Bigr\}.\]
Note that the diameter of $X$ is the same as the diameter of its convex hull. If $X \subseteq \R^n$, we denote by $X^c$ the complement of $X$, that is, $X^c = \R^n \setminus X$, and by $\overline{X}$ the closure of $X$ in $\R^n$.

\subsection{Topological spaces}\label{ssec:Topological}

Let $Y$ and $Z$ be topological spaces.
Two continuous maps $f,g\colon Y\to Z$ are \emph{homotopy equivalent} if there exists a continuous map $H\colon Y\times I\to Z$ with $H(\cdot,0)=f$ and $H(\cdot,1)=g$, where $I=[0,1]$ is the unit interval~\cite{hatcher2002algebraic}.
Two spaces $Y$ and $Z$ are \emph{homotopy equivalent} if there exist continuous maps $f\colon Y\to Z$ and $g\colon Z\to Y$ such that $g\circ f$ is homotopy equivalent to the identity map $\id_Y$ on $Y$, and similarly $f\circ g$ is homotopy equivalent to the identity map $\id_Z$ on $Z$.

\subsection{Wasserstein metric}\label{ssec:Wasserstein}

In this section we describe a way to put a metric on probability Radon measures. The metric has many names: the Wasserstein, Kantorovich, optimal transport, or earth mover's metric. It is known to solve the Monge-Kantorovich problem (see~\cite{vershik2013long}).

Let $X$ be a metric space equipped with a distance function $d\colon X\times X\to\R$. A measure $\mu$ defined on the Borel sets of $X$ is 
\begin{itemize}
\item \emph{inner regular} if $\mu(B)=\sup\{\mu(K)~|~K\subseteq B\mbox{ is compact}\}$ for all Borel sets~$B$,
\item \emph{locally finite} if every point $x\in X$ has a neighborhood $U$ such that $\mu(U)<\infty$,
\item a \emph{Radon measure} if it is both inner regular and locally finite, and
\item a \emph{probability measure} if $\int_Xd\mu = 1$.
\end{itemize}

The following is from~\cite{edwards2011kantorovich,kellerer1982duality}. Let $\cP(X)$ denote the set of probability Radon measures such that for some (and hence all) $y\in X$, we have $\int_X d(x,y)\ d\mu<\infty$.
Define the $L^1$ metric on $X\times X$ by setting the distance between $(x_1,x_2),(x'_1,x'_2)\in X\times X$ to be $d(x_1,x'_1)+d(x_2,x'_2)$.
Given $\mu,\nu\in\cP(X)$, let $\Pi(\mu,\nu)\subseteq\cP(X\times X)$ be the set of all probability Radon measures $\pi$ on $X\times X$ such that $\mu(B)=\pi(B\times X)$ and $\nu(B)=\pi(X\times B)$ for all Borel subsets $B\subseteq X$.
Note that such an element $\pi$ is a joint measure on $X\times X$ whose marginals, when restricted to each $X$ factor, are $\mu$ and $\nu$.
\begin{definition}\label{def:Wasserstein}
The 1-Wasserstein metric on $\cP(X)$ is defined by
\[ d_W(\mu,\nu)=\inf_{\pi\in\Pi(\mu,\nu)}\int_{X\times X}d(x,y)\ d\pi. \]
\end{definition}

The names \emph{optimal transport} or \emph{earth mover's} metric can be interpreted as follows. One can think of measures $\mu$ and $\nu$ as ``piles of dirt" in $X$ with prescribed mass distributions. The joint measure $\pi$ with $\mu$ and $\nu$ as marginals is a transport plan moving the $\mu$ pile of dirt to the $\nu$ pile. The 1-Wasserstein distance between $\mu$ and $\nu$ is the infimum, over all transport plans $\pi$, of the work involved in moving $\mu$ to $\nu$ via transport plan $\pi$.
 
\subsection{Simplicial Complexes}

\begin{definition}\label{def:SimplicialComplex}
Let $V$ be a set, called the set of vertices. An \emph{abstract simplicial complex} $K$ on vertex set $V$ is a subset of the power set of $V$ with the property that if $ \sigma \in K$, then all subsets of $\sigma$ are in $K$.
\end{definition}

Every abstract simplicial complex permits a geometric realization, $|K|$, which is a topological space.
As a set we have
\[ |K|=\Biggl\{\sum_{i=0}^k\lambda_i v_i~\Big|~k\in\N,\ \lambda_i\ge 0,\ \sum_i\lambda_i=1,\ [v_0,\ldots,v_k]\in K\Biggr\}. \]
In the simpler case when $K$ is finite, we can put a topology on $|K|$ as follows. Choose an affinely independent set of points in $\R^n$ (for $n$ sufficiently large) to correspond to each of the elements of the vertex set $V$.
Then $|K|$ consists of all convex linear combinations of these points, and $|K|$ is given its topology as a subset of Euclidean space.
More generally, one can produce a topology on $|K|$ by viewing it as a subset of $[0,1]^V$, the space of functions $V\to[0,1]$.
Indeed note
\[ |K|=\Biggl\{f\colon[0,1]\to V~\Bigg|~\sum_{v\in V}f(v)=1,\ \supp(f)\in K\Biggr\}. \]
Give $[0,1]^V$ its induced topology as the direct limit of $[0,1]^\tau$ where $\tau$ ranges over all finite subsets of $V$, and equip $|K|$ with the subspace topology~\cite{spanier1994algebraic}.

For the rest of this paper we denote both an abstract simplicial complex and its geometric realization by the same symbol $K$.

\subsection{The Vietoris--Rips and \v{C}ech simplicial complexes}

Two natural ways to enlarge a metric space by building a simplicial complex on top of it are the Vietoris--Rips and \v{C}ech complex constructions.

\begin{definition}\label{def:VR}
Let $X$ be a metric space and $r>0$ a scale parameter.
The \emph{Vietoris--Rips simplicial complex} of $X$ with scale parameter $r$, denoted $\vrleq{X}{r}$, has vertex set $X$ and a simplex for every finite subset $\sigma \subseteq X$ such that $\diam(\sigma) \leq r$. 
Similarly, $\vrless{X}{r}$ contains every finite subset with diameter $< r$.
\end{definition}

\begin{figure}[h]
\begin{centering}
\def\svgwidth{5in}
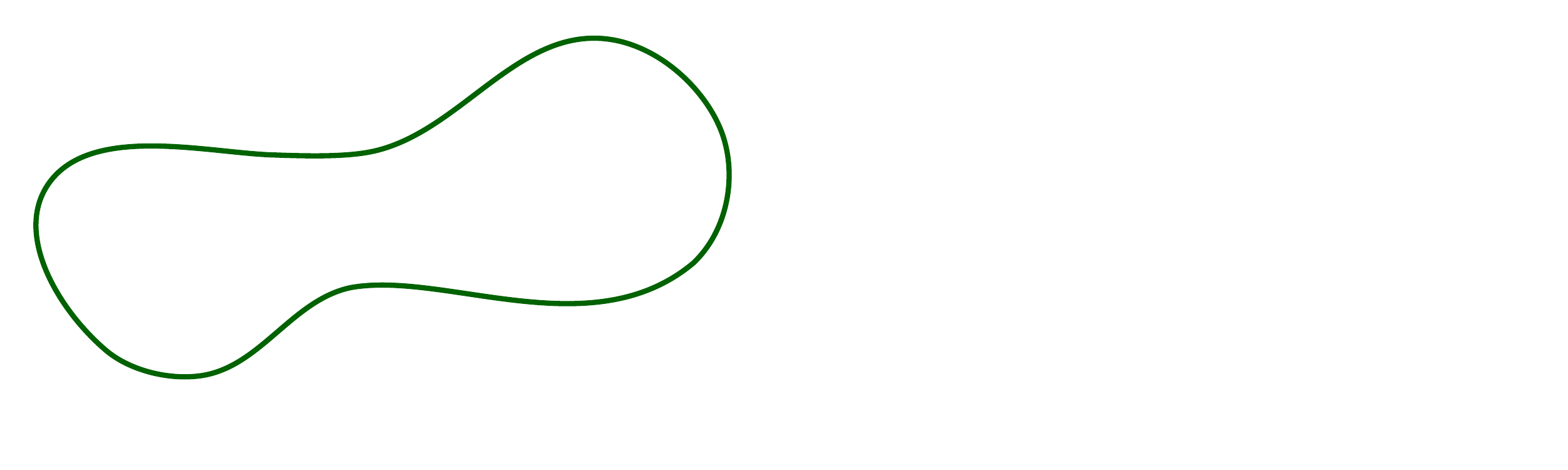
\caption{A metric space $X$ and (a subset of) its Vietoris--Rips complex.}
\end{centering}
\end{figure}

We will write $\vr{X}{r}$ when the distinction between $<$ and $\le$ is unimportant.
The Vietoris--Rips complex is the \emph{clique} or \emph{flag} complex of its 1-skeleton.

\begin{definition}\label{def:ambient-cech}
Let $X\subseteq Y$ be a submetric space and $r$ a scale parameter with $r \geq 0$.
The \v{C}ech complex of $X$ with scale parameter $r$, $\cechaleq{X}{Y}{r}$, has vertex set $X$ and a simplex for every finite subset
$\sigma \subseteq X$ such that $\bigcap_{x_i \in \sigma} \cball{x_i}{r/2} \neq \emptyset$,
where $\cball{x_i}{r/2}$ denotes a closed ball in $Y$ centered at $x_i$ with radius $r/2$.

Similarly, $\cechaless{X}{Y}{r}$ contains a simplex for every finite subset $\sigma$ such that 
$\bigcap_{x_i \in \sigma} \ball{x_i}{r/2} \neq \emptyset$.
\end{definition}

Again, we will write $\cecha{X}{Y}{r}$ when the distinction between open and closed is unimportant.
The \v{C}ech complex can be considered as the nerve of the union of balls in $Y$ of radius $r/2$ centered at each of the points in $X$.
Of particular interest are the cases where $Y = \R^n$ and $Y = X$.
These are called the called the \emph{ambient} and \emph{intrinsic} \v{C}ech complex, respectively.
Note that if $X \subseteq\R^n$ then $\cecha{X}{X}{r} \subseteq \cecha{X}{\R^n}{r}$.
When it is not necessary to distinguish these two we will write $\cech{X}{r}$.

For any $\sigma\in\cecha{X}{Y}{r}$ we have $\diam(\sigma) \le r$, and so $\cecha{X}{Y}{r}$ is a subcomplex of $\vr{X}{r}$.
When $Y$ is a geodesic space, the complexes $\vr{X}{r}$ and $\cecha{X}{Y}{r}$ have the same $1$-skeletons.
But independent of whether $Y$ is geodesic or not, the \v{C}ech complex can be a proper subset of the Vietoris--Rips complex.

A useful characterisation is the following:
\begin{proposition}\label{prop:cech-ball}
A set of points $x_0 , \ldots , x_k\in X$ form a simplex in $\cechaless{X}{Y}{r}$ if and only if there exists a point $c\in Y$ such that every $x_i$ is contained in the open ball $\ball{c}{r/2}$. A similar statement is true for $\cechaleq{X}{Y}{r}$ with closed balls.
\end{proposition}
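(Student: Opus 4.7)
The plan is to reduce the statement directly to the symmetry of the metric. The key observation is that for any $c \in Y$ and any $x_i \in X$, we have $c \in \ball{x_i}{r/2}$ if and only if $d(c,x_i) < r/2$, which by the symmetry axiom of the metric is equivalent to $d(x_i,c) < r/2$, i.e.\ $x_i \in \ball{c}{r/2}$. So the condition ``$c$ is a common witness in the intersection of balls centered at the $x_i$'' is logically the same as the condition ``the $x_i$ are all contained in a common ball centered at $c$.''

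For the forward direction, if $x_0,\ldots,x_k$ form a simplex in $\cechaless{X}{Y}{r}$, then by Definition~\ref{def:ambient-cech} the intersection $\bigcap_i \ball{x_i}{r/2}$ is nonempty in $Y$; pick any $c$ in this intersection, and the observation above yields $x_i \in \ball{c}{r/2}$ for every $i$. For the reverse direction, given $c \in Y$ such that $x_i \in \ball{c}{r/2}$ for all $i$, the same symmetry gives $c \in \ball{x_i}{r/2}$ for all $i$, so $c$ witnesses that $\bigcap_i \ball{x_i}{r/2} \neq \emptyset$ and hence $\{x_0,\ldots,x_k\}$ is a simplex.

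The closed-ball version follows by the identical argument with $<$ replaced by $\leq$ throughout. There is no real obstacle here; the proposition is essentially a bookkeeping observation, and the only thing to verify is that the symmetry step is clearly invoked. I would keep the write-up to a short paragraph and possibly omit the closed-ball case with a ``similarly'' remark.
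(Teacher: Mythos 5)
Your proof is correct and follows the same route as the paper's: both directions reduce to the observation that $c\in\ball{x_i}{r/2}$ if and only if $x_i\in\ball{c}{r/2}$ by symmetry of the metric. You simply make the symmetry step more explicit than the paper does; no changes needed.
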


\begin{proof}
Let $c$ be any point in the intersection $\bigcap_{x_i \in \sigma} B(x_i,r/2)$.
Then $B(c,r/2)$ contains all of the $x_i$.
Conversely, if all of the $x_i$ are in $B(c,r/2)$, then $c$ is in $\bigcap_{x_i \in \sigma} B(x_i,r/2)$ and hence the intersection is nonempty.
\end{proof}

The Vietoris--Rips and \v{C}ech complexes are given the standard topology as simplicial complexes: a subset of the geometric realization is open if and only if its intersection with every simplex is open.
An important remark is the following. 
A simplicial complex $K$ is said to be \emph{locally finite} if each vertex belongs to only finitely many simplices of $K$, and a simplicial complex is metrizable if and only if it is locally finite~\cite[Proposition~4.2.16(2)]{sakai2013geometric}. This means that in general, the Vietoris--Rips and \v{C}ech simplicial complexes cannot be equipped with a metric without changing their homeomorphism types, even though they were built on top of a metric space.

\subsection{The Vietoris--Rips and \v{C}ech Thickenings}

The definitions in this section are from~\cite{MetricReconstructionViaOptimalTransport,edwards2011kantorovich,kellerer1982duality}.
 
Given a metric space $X$ and a scale parameter $r$ we will define the \emph{Vietoris--Rips thickening} $\vrm{X}{r}$, which will be a metric space $r$-thickening of $X$. As a set, $\vrm{X}{r}$ is the set of all formal convex combinations of points in $X$ with diameter at most $r$, namely\footnote{There is a canonical bijection between the sets underlying $\vrm{X}{r}$ and the geometric realization $|\vr{X}{r}|$ of the Vietoris--Rips simplicial complex. However, these two topological spaces will often not be homeomorphic (and sometimes not even homotopy equivalent).}
\begin{align*}
\vrmleq{X}{r}&=\Bigl\{ \sum_{i = 0}^k \lambda_i x_i~|~k\in\N,\ \lambda_i\ge 0,\ \sum_i\lambda_i=1,\ x_i\in X\mbox{, and }\diam(\{x_0,\ldots,x_k\}) \le r \Bigr\}\\
\vrmless{X}{r}&=\Bigl\{ \sum_{i = 0}^k \lambda_i x_i~|~k\in\N,\ \lambda_i\ge 0,\ \sum_i\lambda_i=1,\ x_i\in X\mbox{, and }\diam(\{x_0,\ldots,x_k\}) < r \Bigr\}.
\end{align*}
A useful viewpoint is to consider an element of $\vrm{X}{r}$ as a probability measure.
For $x\in X$, let $\delta_x$ be the Dirac probability measure defined on any Borel subset $E \subseteq X$ by
\[
   \delta_x(E) = \left\{\begin{array}{l c} 1 & \mbox{if }x \in E \\
   0 & \mbox{if }x \notin E.
   \end{array}\right.
\]
By identifying $x \in X$ with $\delta_x\in\cP(X)$, and more generally $x=\sum_{i = 0}^k \lambda_i x_i$ with $ \sum_{i = 0}^k \lambda_i \delta_{x_i}\in\cP(X)$, we can view $\vrm{X}{r}$ as a subset of $\cP(X)$, the set of all Radon probability measures on $X$.
Hence we can equip the set $\vrm{X}{r}$ with the $1$-Wasserstein metric from Section~\ref{ssec:Wasserstein}, namely $d_W(x,x') = \inf_{\pi \in \Pi(x,x')} \int_{X \times X} d(x,x')\ d\pi$ for $x,x'\in\vrm{X}{r}$.

To give a more explicit definition of the metric on $\vrm{X}{r}$, let $x,x' \in \vrm{X}{r}$ with $x = \sum_{i = 0}^k \lambda_i x_i$ and $x' = \sum_{i = 0}^{k'} \lambda_i' x'_i$ (we cease to distinguish between $x \in X$ and its associated measure, $\delta_x$).
Define a matching $p$ between $x$ and $x'$ to be any collection of non-negative real numbers $\{p_{i,j}\}$ such that $\sum_{j = 0}^{k'} p_{i,j} = \lambda_i $ and $\sum_{i = 0}^k p_{i,j} = \lambda_j'$. It follows as a consequence that $\sum_{i,j}p_{i,j}=1$, and so matching $\{p_{i,j}\}$ can be thought of as a joint probability distribution with marginals $\{\lambda_i\}_{i=0}^k$ and $\{\lambda'_j\}_{j=0}^{k'}$. 
Define the \emph{cost of the matching $p$} to be $\cost(p) = \sum_{i,j} p_{i,j} d(x_i,x_j')$.

\begin{definition}\label{def:Wasserstein}
The $1$-Wasserstein metric on $\vrm{X}{r}$ is the distance $d_W$ defined by 
\begin{equation*}
d_W(x,x') = \inf \left\{ \cost(p) \suchthat p \text{ is a matching between } x \text{ and } x' \right\}.
\end{equation*}
\end{definition}

Similar to the Vietoris--Rips thickening, we can construct the \v{C}ech thickening $\cecham{X}{Y}{r}$ equipped with the 1-Wasserstein metric.
The construction is exactly the same, except that the elements of $\cecham{X}{Y}{r}$ are the convex combinations of vertices from simplices in $\cecha{X}{Y}{r}$ (rather than in $\vr{X}{r}$).
By~\cite[Lemma~3.5]{MetricReconstructionViaOptimalTransport}, both $\vrm{X}{r}$ and $\cecham{X}{Y}{r}$ are $r$-thickenings of the metric space $X$.

One could alternatively consider a $p$-Wasserstein metric for $1\le p\le \infty$.

\subsection{Sets of positive reach}
We are interested in the case where metric space $X$ is a subset of $\R^n$ of \emph{positive reach}.
In particular, any embedded $C^k$ submanifold (with or without boundary) of $\R^n$ with $k \ge 2$ has positive reach~\cite{Thale}, but not every set of positive reach is a manifold~\cite{FedererCurvature,rataj2017structure}.
Consider the set 
\[
Y = \left\{ y \in \R^n \suchthat \exists\, x_1 \neq x_2 \in X \text{ with } d(y,x_1) = d(y,x_2) = d(y,X) \right\}.
\]
The closure $\overline{Y}$ of $Y$ is the \emph{medial axis} of $X$.
For any point $x \in X$, the \emph{local feature size at $x$} is the distance $d(x,\overline{Y})$ from $x$ to the medial axis.
The \emph{reach} $\tau$ of $X$ is the minimal distance $\tau = d(X,\overline{Y})$ between $X$ and its medial axis.

For $X \subseteq\R^n$ and $\alpha > 0$ we define its $\alpha$-offset (or tubular neighborhood), $\Tub_{\alpha}$, by 
\[
\Tub_{\alpha} = \left\{ x \in \R^n \suchthat d(x,X) < \alpha \right\} = \bigcup_{x \in X} \ball{x}{\alpha} .
\]
In particular, if $X$ has reach $\tau$, then for every point in $\Tub_{\tau}$ there exists a unique nearest point in $X$.
As in~\cite{FedererCurvature,NiyogiSmaleWeinberger}, define $\pi \colon \Tub_{\tau} \to X$ to be the nearest point projection map, sending an element $x\in\Tub_{\tau}$ to its unique closest point $\pi(x)\in X$.

\begin{lemma}\label{lem:pi-continuous}
The function $\pi \colon \Tub_{\tau} \to X$ is continuous.
\end{lemma}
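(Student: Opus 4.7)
My plan is to prove continuity sequentially, exploiting the fact that both $\Tub_\tau$ and $X$ are metrizable, and that points of $\Tub_\tau$ have \emph{unique} nearest points in $X$. So let $x_n \to x$ in $\Tub_\tau$; I want to show $\pi(x_n) \to \pi(x)$. Set $p_n := \pi(x_n)$ and $p := \pi(x)$. The key input is the well-known fact that for any subset $X \subseteq \R^n$, the distance function $y \mapsto d(y,X)$ is $1$-Lipschitz in $y$, hence continuous.

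First I would establish that the sequence $\{p_n\}$ is bounded in $\R^n$. Since $x_n \to x$, the sequence $\{x_n\}$ is bounded. By definition of $p_n$ as a nearest point, $\|x_n - p_n\| = d(x_n, X) \le d(x_n, p)$, which is bounded because $x_n \to x$ and $p$ is fixed. The triangle inequality then gives that $\{p_n\}$ is bounded. (Since the paper defines reach via the medial axis as a closure, the set $X$ should be taken to be closed; if the paper works throughout with $\overline X$ or implicitly assumes $X$ closed, I would use that; otherwise I would work with $\overline{X}$, noting that $X$ and $\overline{X}$ have the same nearest-point projection on $\Tub_\tau$.)

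Next I would use a standard subsequence-of-subsequences argument to promote boundedness to convergence. Let $\{p_{n_k}\}$ be any subsequence; by boundedness it has a further subsequence, which I still call $\{p_{n_k}\}$, converging to some $p^* \in \R^n$. Because $X$ is closed, $p^* \in X$. Taking $k \to \infty$ in the identity $\|x_{n_k} - p_{n_k}\| = d(x_{n_k}, X)$ and using continuity of the distance function $d(\,\cdot\,, X)$, I obtain
\[
\|x - p^*\| \;=\; d(x, X).
\]
So $p^*$ is a nearest point in $X$ to $x$. But $x \in \Tub_\tau$, so by the defining property of the reach the nearest point in $X$ to $x$ is unique, forcing $p^* = p = \pi(x)$. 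Since every subsequence of $\{p_n\}$ has a further subsequence converging to $\pi(x)$, the whole sequence satisfies $p_n \to \pi(x)$, which is what was needed.

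The only genuine subtlety is the boundedness/subsequential extraction step — the rest is just uniqueness of nearest points combined with continuity of $d(\,\cdot\,,X)$. If one wanted a more quantitative statement, I would instead appeal to Federer's results on sets of positive reach (for example \cite{FedererCurvature}), which give local Lipschitz continuity of $\pi$ on any compactly contained subset of $\Tub_\tau$; but for mere continuity, the sequential argument above is the cleanest route.
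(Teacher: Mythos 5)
Your proof is correct, but it takes a genuinely different route from the paper. The paper simply invokes Federer's quantitative estimate \cite[Theorem~4.8(8)]{FedererCurvature}, which gives $d(\pi(x),\pi(y)) \le \frac{\tau}{\tau - r}\, d(x,y)$ with $r = \max\{d(x,\pi(x)), d(y,\pi(y))\}$, and reads off continuity from this local Lipschitz bound. You instead give a self-contained soft argument: boundedness of the nearest points $p_n$, extraction of convergent subsequences, identification of every subsequential limit as a nearest point to $x$ via continuity of $d(\cdot,X)$, and then uniqueness of the nearest point (the defining property of $\Tub_\tau$) to force all limits to coincide with $\pi(x)$. Your approach uses strictly less structure --- only closedness of $X$ and uniqueness of nearest points within the tube, not the full machinery of positive reach --- and is elementary, at the cost of yielding no modulus of continuity; the paper's approach is a one-line citation that buys the stronger local Lipschitz conclusion. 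Your side remark about closedness of $X$ is well taken: the paper implicitly assumes $X$ closed (otherwise nearest points need not exist and $\pi$ would not be well defined), and your fallback of replacing $X$ by $\overline{X}$ handles this cleanly. You correctly note the Federer route as the quantitative alternative, which is in fact exactly what the paper does.
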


\begin{proof}
Let $x,y \in \Tub_{\tau}$ and $r = \max\{d(x,\pi(x)),d(y,\pi(y))\}$.
Then the conditions of~\cite[Theorem~4.8(8)]{FedererCurvature} are satisfied and so
\begin{equation}
d(\pi(x),\pi(y)) \leq \frac{\tau}{\tau - r} d(x,y).
\end{equation}
Thus $\pi$ is continuous at $x$ for any $x \in \Tub_{\tau}$.
\end{proof}

We also state the following proposition, implicit in~\cite{NiyogiSmaleWeinberger}, for any set of positive reach.

\begin{proposition}\label{prop:empty-ball}
Let $X \subseteq\R^n$ have reach $\tau > 0$.
Let $p \in X$ and suppose $x\in\Tub_\tau\setminus X$ satisfies $\pi(x)=p$.
If $c=p+\tau\frac{x-p}{\|x-p\|}$, then $\ball{c}{\tau} \cap X = \emptyset$.
\end{proposition}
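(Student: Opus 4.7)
The plan is to show that the entire ``spoke'' from $p$ out to $c$ has $p$ as its unique nearest point in $X$ (except at the endpoint $c$ itself), and then to use this to contradict the existence of a supposed point $y \in X \cap \ball{c}{\tau}$.

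First, I would set up the parameterization: let $v = (x-p)/\|x-p\|$ be the unit vector from $p$ to $x$, and define $c_t = p + tv$ for $t \in [0, \tau]$, so that $c_0 = p$, $c_{\|x-p\|} = x$, and $c_\tau = c$.

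The key intermediate claim is that $\pi(c_t) = p$ (and in particular $c_t \in \Tub_\tau$) for every $t \in [0, \tau)$. For $t \in [0, \|x-p\|]$ this follows from an elementary triangle-inequality argument: if some $q \in X$ satisfied $\|c_t - q\| < \|c_t - p\|$, then
\[
\|x - q\| \le \|x - c_t\| + \|c_t - q\| < \|x - c_t\| + \|c_t - p\| = \|x - p\|,
\]
contradicting $\pi(x) = p$. For $t \in (\|x-p\|, \tau)$ the extension is subtler and is the essential content of Federer's structure theorem for sets of positive reach~\cite{FedererCurvature}, which implies that $\pi^{-1}(p) \cap \Tub_\tau$ contains the full normal spoke $\{p + tv : t \in [0, \tau)\}$ whenever $v$ has the form $(x - p)/\|x - p\|$ with $\pi(x) = p$. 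This is the main obstacle; the segment portion is immediate, but passing beyond $t = \|x-p\|$ genuinely requires the positive-reach hypothesis.

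Next, I would suppose for contradiction that some $y \in X$ satisfies $\|y - c\| < \tau$, and expand
\[
\|y - c_t\|^2 = \|y - p\|^2 - 2t \langle v, y - p \rangle + t^2.
\]
Evaluating at $t = \tau$ and using $\|y - c\|^2 < \tau^2$ rearranges to $\|y - p\|^2 < 2\tau \langle v, y - p \rangle$, so in particular $\langle v, y - p\rangle > 0$ and the threshold $t^\ast := \|y - p\|^2 / (2 \langle v, y - p \rangle)$ satisfies $t^\ast < \tau$.

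Finally, I would choose any $t \in (t^\ast, \tau)$. The same expansion then yields $\|y - c_t\|^2 < t^2 = \|c_t - p\|^2$, so $y$ is strictly closer to $c_t$ than $p$ is. Combined with $y \in X$, this contradicts the conclusion $\pi(c_t) = p$ of the key intermediate claim, completing the proof. The only nontrivial ingredient is the extension of $\pi(c_t) = p$ past the point $x$; everything else is direct computation using the identity $\|c_t - p\| = t$.
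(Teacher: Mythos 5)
Your proof is correct and follows essentially the same route as the paper: both establish that each ball $\ball{c_t}{t}$ centered on the normal spoke and tangent to $X$ at $p$ misses $X$, and then show that any point of $\ball{c}{\tau}$ lies in one of these smaller balls (you locate the witness $t$ by an inner-product computation, the paper by a triangle-inequality argument). Your explicit observation that extending $\pi(c_t)=p$ past $t=\|x-p\|$ genuinely requires Federer's results on sets of positive reach is the one step the paper's proof leaves implicit in its assertion that $\cball{y_t}{t}\cap X=\{p\}$.
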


\begin{proof}
For any $0 < t < \tau$, let $y_t = p + t\frac{x-p}{\|x-p\|}$.
Since $y_t \in \Tub_\tau$, we have $\cball{y_t}{t} \cap X = \{p\}$ and $d(y_t,p) = t$, so $\ball{c}{t} \cap X = \emptyset$.
Note that $\ball{c}{\tau} = \cup_{0 < t < \tau} \ball{y_t}{t}$. Indeed, to see the inclusion $\subseteq$, suppose that $z \in \ball{c}{\tau}$, so that $d(z,c) = \tau - \epsilon$ for some $\epsilon > 0$.
Let $t = \tau - \frac{\epsilon}{3}$.
By the triangle inequality, $d(y_t,z) \le d(y_t,c) + d(c,z) = \tau - \frac{2 \epsilon}{3} < t$, giving $z\in \ball{y_t}{t}$.
The reverse inclusion $\supseteq$ is straightforward.
It follows that $\ball{c}{\tau} \cap X = \emptyset$.
\end{proof}

\section{A metric and Euclidean analogue of Hausmann's result}\label{sec:Euclidean-Hausmann}

We now present our main theorem, a metric analogue of Hausmann's result for Vietoris--Rips thickenings of subsets of Euclidean space with positive reach.
Since in Section~\ref{sec:Euclidean-nerve} we will also give an analogous theorem for the metric \v{C}ech thickening, we provide some notation now for both cases.
Let $X \subseteq \R^n$ be a set of positive reach.
Let $K(X;r)$ be either a Vietoris--Rips complex or \v{C}ech complex of $X$ with scale parameter $r$, and let $K^m(X;r)$ be the corresponding metric Vietoris--Rips or \v{C}ech thickening.
Define $f\colon K^m(X;r) \rightarrow \R^n$ to be the linear projection map $f\left( \sum_{i} \lambda_i x_i \right) = \sum_{i} \lambda_i x_i \in \R^n$, where the first sum is a formal convex combination of points in $X$, and the second sum is the standard addition of vectors in $\R^n$.
Recall $\pi \colon \Tub_{\tau} \to X \subseteq \R^n$ is the nearest-point projection map.

Several geometric lemmas are required.

\begin{lemma}\label{lem:convex}
Let $x_0,\ldots,x_k\in\R^n$, let $y\in\conv\{x_0,\ldots,x_k\}$, and let $C$ be a convex set with $y\notin C$. Then there is at least one $x_i$ with $x_i\notin C$.
\end{lemma}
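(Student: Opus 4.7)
The plan is to prove this by contrapositive, using the defining property of convex sets. Suppose, for the sake of contradiction, that every $x_i$ lies in $C$. Since $y \in \conv\{x_0,\ldots,x_k\}$, we can write $y = \sum_{i=0}^k \lambda_i x_i$ for some coefficients $\lambda_i \geq 0$ summing to $1$.

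The key observation is that a convex set, by definition, is closed under binary convex combinations, and a straightforward induction on the number of summands $k$ extends this closure to arbitrary finite convex combinations. Thus, if all the $x_i$ lie in $C$, then $y = \sum_i \lambda_i x_i$ must also lie in $C$. This contradicts the hypothesis that $y \notin C$, so at least one $x_i$ must lie outside $C$.

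There is essentially no obstacle here; the lemma is an immediate restatement of the fact that $\conv(S) \subseteq C$ whenever $S \subseteq C$ and $C$ is convex. The only detail worth mentioning explicitly is the inductive extension from 2-point convex combinations (the definition) to $(k+1)$-point convex combinations, which is handled by peeling off one vertex: if $\lambda_k < 1$, write
\[
y \;=\; (1-\lambda_k)\sum_{i=0}^{k-1}\frac{\lambda_i}{1-\lambda_k}x_i \;+\; \lambda_k x_k,
\]
apply the inductive hypothesis to the inner sum (which is a convex combination of $x_0,\ldots,x_{k-1}\in C$), and then invoke binary convexity. The edge case $\lambda_k = 1$ gives $y = x_k \in C$ trivially.
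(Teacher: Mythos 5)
Your proof is correct and takes essentially the same approach as the paper: both argue by contradiction that if every $x_i$ lay in $C$, convexity would force $y\in\conv\{x_0,\ldots,x_k\}\subseteq C$. You simply spell out the routine induction showing a convex set contains all finite convex combinations, which the paper leaves implicit.
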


\begin{proof}
Suppose for a contradiction that we had $x_i\in C$ for all $i=0,\ldots,k$. Then since $C$ is convex, we'd also have $y\in \conv\{x_0,\ldots,x_k\}\subseteq C$. Hence it must be the case that $x_i\notin C$ for some $i$.
\end{proof}

\begin{lemma}\label{lem:vr-tub}
For $X\subseteq\R^n$ and $r>0$, the map $f\colon \vrm{X}{r}\to\R^n$ has its image contained in $\overline{\Tub_r}$.
\end{lemma}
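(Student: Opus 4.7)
The plan is to verify the lemma directly from the triangle inequality, using only the diameter bound built into the definition of $\vrm{X}{r}$. No geometric input about the reach or the tubular neighborhood is needed — the statement is really just about how far a convex combination can stray from its own support.

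First I would take a typical element $y = \sum_{i=0}^k \lambda_i x_i \in \vrm{X}{r}$, so that $\diam\{x_0,\ldots,x_k\} \le r$ and $\sum_i \lambda_i = 1$. Fix any index $j \in \{0,\ldots,k\}$. The idea is to show that the image $f(y) = \sum_i \lambda_i x_i \in \R^n$ lies within distance $r$ of the vertex $x_j \in X$, which will place $f(y)$ in $\cball{x_j}{r} \subseteq \overline{\Tub_r}$.

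Next I would write
\[
\|f(y) - x_j\| = \Bigl\|\sum_{i=0}^k \lambda_i (x_i - x_j)\Bigr\| \le \sum_{i=0}^k \lambda_i \|x_i - x_j\|,
\]
and then bound each $\|x_i - x_j\| \le \diam\{x_0,\ldots,x_k\} \le r$. Combined with $\sum_i \lambda_i = 1$, this gives $\|f(y) - x_j\| \le r$, so $f(y) \in \cball{x_j}{r}$. Since $\ball{x_j}{r} \subseteq \Tub_r$, taking closures gives $\cball{x_j}{r} \subseteq \overline{\Tub_r}$, and hence $f(y) \in \overline{\Tub_r}$, as required. The same argument handles both the $\diam \le r$ and $\diam < r$ variants; in the latter case one in fact lands in $\Tub_r$ itself, but the weaker closed-tube conclusion suffices for the lemma as stated.

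I do not anticipate any real obstacle: the proof is a one-line application of the triangle inequality to a finite convex combination, and the only subtlety worth flagging is the trivial point that $\cball{x_j}{r}$ sits inside $\overline{\Tub_r}$ even though it is not in general contained in the open tube $\Tub_r$.
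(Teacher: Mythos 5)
Your proof is correct and follows essentially the same route as the paper: both arguments bound the distance from $f(y)$ to a vertex $x_j$ by $r$ using the diameter condition, the only difference being that the paper invokes the fact that $\diam(\conv\{x_0,\ldots,x_k\})=\diam(\{x_0,\ldots,x_k\})$ while you unpack that one-liner via the triangle inequality on the convex combination. No gaps.
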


\begin{proof}
Let $x = \sum_{i=0}^k \lambda_i x_i\in \vrm{X}{r}$; we have
\[ \diam(\conv\{x_0,\ldots,x_k\})=\diam([x_0 , \ldots , x_k])\le r.\]
Since $f(x)\in\conv\{x_0,\ldots,x_k\}$, it follows that $d(f(x),X)\le d(f(x),x_0)\le r$, and so $f(x)\in\overline{\Tub_r}$.
\end{proof}

The substance of Lemma~\ref{lem:vr-simplex} will be that if $[x_0,\ldots,x_k]$ is a simplex in $\vr{X}{r}$ and if $x=\sum_i\lambda_i x_i\in \vrm{X}{r}$, then $\pi(f(x))$ will be ``close enough" to $x_0,\ldots,x_k$ so that $[x_0,\ldots,x_k,\pi(f(x))]$ is also a simplex in $\vr{X}{r}$.
This fact will be crucial for defining the homotopy equivalences in our proof of Theorem~\ref{thm:main}.

\begin{lemma}\label{lem:vr-simplex}
Let $X\subseteq\R^n$ have positive reach $\tau$, let $[x_0, \ldots x_k]$ be a simplex in $\vr{X}{r}$ with $r < \tau$, let $x = \sum \lambda_i x_i\in\vrm{X}{r}$, and let $p=\pi(f(x))$.
Then the simplex $[x_0 , \ldots , x_k , p]$ is in $\vr{X}{r}$.
\end{lemma}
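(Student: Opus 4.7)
The plan is to verify $\|p - x_j\| \leq r$ for every $j$; since $\{x_0,\ldots,x_k\}$ already has diameter at most $r$, this places $[x_0,\ldots,x_k,p]$ in $\vr{X}{r}$. Write $y := f(x) \in \R^n$; by Lemma~\ref{lem:vr-tub} and $r<\tau$ we have $y \in \Tub_\tau$, so $p = \pi(y)$ is well-defined. If $y \in X$ then $p = y$, and since $\sum_i \lambda_i = 1$ we have $p - x_j = \sum_l \lambda_l (x_l - x_j)$, giving $\|p - x_j\| \leq \sum_l \lambda_l \|x_l - x_j\| \leq r$ directly from the diameter hypothesis. Otherwise $\eta := \|y - p\| > 0$; set $u := (y - p)/\eta$ and $c := p + \tau u$. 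Proposition~\ref{prop:empty-ball} gives $\ball{c}{\tau} \cap X = \emptyset$, so after translating $p$ to the origin and writing $\rho_j := \|x_j\|$, $\alpha_j := \langle x_j, u\rangle$, the condition $\|x_j - c\|^2 \geq \tau^2$ expands to the reach inequality $\rho_j^2 \geq 2\tau\alpha_j$.

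Next I would expand the diameter bound $\|x_j - x_l\|^2 \leq r^2$ as $\langle x_j, x_l\rangle \geq (\rho_j^2 + \rho_l^2 - r^2)/2$ and average against $\lambda_l$, using $\sum_l \lambda_l x_l = y = \eta u$, to obtain
\[ 2\eta\alpha_j \;\geq\; \rho_j^2 + \sum_l \lambda_l \rho_l^2 - r^2. \]
The reach inequality rewritten as $2\eta\alpha_j \leq \eta\rho_j^2/\tau$ combines with this to give $\rho_j^2(1 - \eta/\tau) \leq r^2 - \sum_l \lambda_l \rho_l^2$. Averaging the reach inequality against $\lambda_l$ separately yields $\eta = \sum_l \lambda_l \alpha_l \leq \sum_l \lambda_l \rho_l^2/(2\tau)$, hence $\sum_l \lambda_l \rho_l^2 \geq 2\tau\eta$, and substituting produces
\[ \rho_j^2\,(1 - \eta/\tau) \;\leq\; r^2 - 2\tau\eta. \]
A short algebraic manipulation, equivalent to $r^2 \leq 2\tau^2$ (which follows from $r < \tau$), shows the right-hand side is at most $r^2(1 - \eta/\tau)$, giving $\rho_j \leq r$ as required.

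The main obstacle is that neither hypothesis is strong enough on its own: the diameter alone yields only the crude triangle-inequality bound $\|p - x_j\| \leq \|p - y\| + \|y - x_j\| \leq 2r$, while the reach inequality $\rho_j^2 \geq 2\tau\alpha_j$ controls only the component of $x_j - p$ along $u$. The interplay is mediated by the convex-combination identity $\sum_l \lambda_l x_l = y$, which forces the average $\sum_l \lambda_l \rho_l^2$ to be at least $2\tau\eta$; this is exactly what is needed to tighten the diameter-plus-reach estimate on each individual $\rho_j$ from $2r$ down to $r$.
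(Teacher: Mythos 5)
Your proof is correct, and it takes a genuinely different route from the paper's. The paper argues by contradiction and geometrically: assuming $d(x_0,p)>r$, it constructs the tangent ball $\ball{c}{\tau}$ and the line $T_p^\perp$ through $f(x)$ and $p$, shows that $\cball{x_0}{r}\setminus\ball{c}{\tau}$ (which must contain every $x_i$) lies in an open half-space $H_{x_0}$ bounded by a hyperplane through $T_p^\perp$, and then derives a contradiction from Lemma~\ref{lem:convex} since $f(x)\in\conv\{x_0,\ldots,x_k\}$ but $f(x)\notin H_{x_0}$. Your argument replaces this with a direct computation: the emptiness of $\ball{c}{\tau}$ from Proposition~\ref{prop:empty-ball} becomes the quadratic inequality $\rho_j^2\ge 2\tau\alpha_j$, the diameter hypothesis becomes a lower bound on $\langle x_j,x_l\rangle$, and the convexity enters only through the identity $\sum_l\lambda_l x_l=f(x)$ used to average both inequalities. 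I checked the algebra and it closes: the final step needs $1-\eta/\tau>0$, which holds since $\eta=d(f(x),X)\le r<\tau$ by Lemma~\ref{lem:vr-tub} (worth stating explicitly before dividing), and the comparison $r^2-2\tau\eta\le r^2(1-\eta/\tau)$ does reduce to $r^2\le 2\tau^2$. What your approach buys is that it is direct rather than by contradiction, it treats all indices $j$ uniformly, and it avoids the paper's somewhat delicate geometric claim $H_{x_0}^c\cap\cball{x_0}{r}\subseteq\ball{c}{\tau}$ in favor of checkable inequalities; what it gives up is the geometric picture that makes the paper's proof transferable almost verbatim to the \v{C}ech case in Lemmas~\ref{lem:cech-simplex-ambient} and~\ref{lem:cech-simplex-intrinsic}. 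For completeness you should also note, as the paper does, that the strict-inequality variant for $\vrless{X}{r}$ follows by the same computation with strict inequalities.
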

\begin{proof}
We write the proof for $\vrleq{X}{r}$; an analogous proof works for $\vrless{X}{r}$. 
Note $p=\pi(f(x))$ is defined by Lemma~\ref{lem:vr-tub} since $\overline{\Tub_r}\subseteq \Tub_\tau$.
We may assume $p\neq f(x)$, since otherwise the conclusion follows as $f(x)$ is in the convex hull of the $x_i$.

Suppose for a contradiction that $d(x_i, p)> r$ for some $i$; without loss of generality we may assume $i=0$.
Since $d(x_0, f(x))\le r$ we have that $f(x)\neq p$.
Following~\cite{NiyogiSmaleWeinberger}, let $c=p+\tau\frac{f(x)-p}{\|f(x)-p\|}$, and let $\ball{c}{\tau}$ be the open ball of radius $\tau$ that is tangent to $X$ at $p$.
By Proposition~\ref{prop:empty-ball} this open ball does not intersect $X$, giving $x_0,\ldots,x_k\notin \ball{c}{\tau}$.
Define $T_p^{\perp}$ to be the line through $f(x)$ and $p$.
Since $f(x)$ is between $p$ and $c$ on $T_p^\perp$, note that $d(x_0, f(x))\le r$ implies $x_0$ is not on $T_p^\perp$.
Let $x'_0\neq x_0$ be the closest point on $T_p^\perp$ to $x_0$. Let $H_{x_0}=\{z\in\R^n~|~\langle z-x'_0,x_0-x'_0\rangle>0\}$ be the open half-space containing $x_0$, whose boundary is the hyperplane containing $T_p^\perp$ that's perpendicular to $x_0-x'_0$. 
Since $d(x_0, p),d(x_0,c)>r$, it follows that $H_{x_0}^c\cap \cball{x_0}{r}\subseteq \ball{c}{\tau}$.
Since $x_i\in \cball{x_0}{r}\setminus \ball{c}{\tau}$, this implies that $x_i\in H_{x_0}$ for all $i$.
This contradicts Lemma~\ref{lem:convex} since $H_{x_0}$ is convex with $f(x)\notin H_{x_0}$, even though $f(x)\in\conv(\{x_0,\ldots,x_k\})$.
Hence it must be the case that $d(x_0,p)\le r$, and it follows that $[x_0 , \ldots , x_k , p]$ is a simplex in $\vrleq{X}{r}$.
\end{proof}

\begin{figure}[h]
\def\svgwidth{3in}
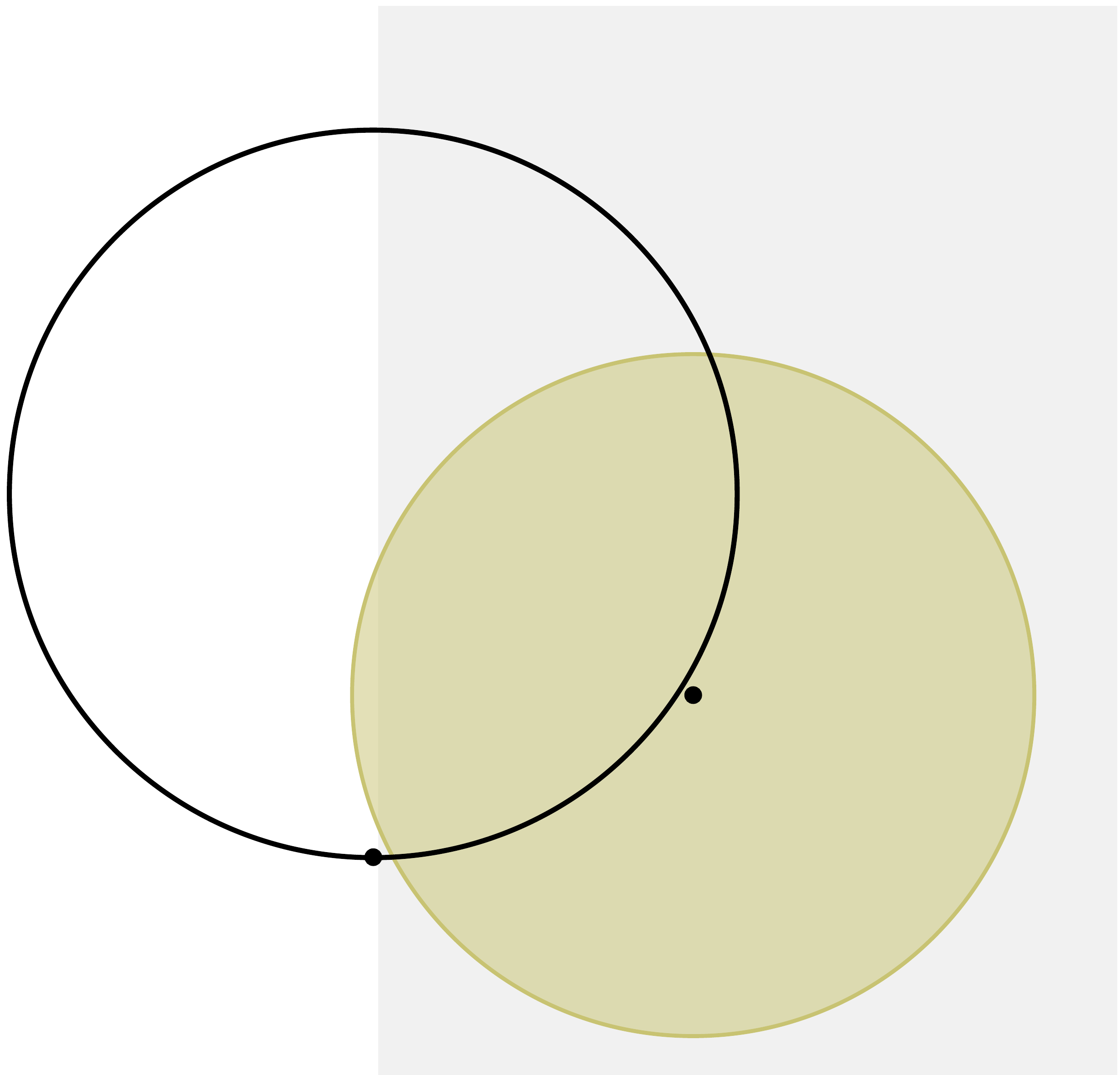
\caption{Figure for the proof of Lemma~\ref{lem:vr-simplex}. The green shaded region is a subset of $\ball{c}{\tau}$, forcing all $x_i$ to be in the same half-space.}
\end{figure}

We are now prepared to prove our main result. We remark that when $X$ is a manifold, it may be possible to estimate the reach of $X$ using the techniques of~\cite{aamari2017estimating,attali2009stability,dey2006normal,fefferman2016testing}.

\begin{theorem}\label{thm:main}
Let $X$ be a subset of Euclidean space $\R^n$, equipped with the Euclidean metric, and suppose the reach $\tau$ of $X$ is positive. Then for all $r < \tau$, the metric Vietoris--Rips thickening $\vrm{X}{r}$ is homotopy equivalent to $X$.
\end{theorem}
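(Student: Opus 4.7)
The plan is to exhibit a deformation retraction of $\vrm{X}{r}$ onto $X$, with the canonical inclusion $\iota \colon X \inj \vrm{X}{r}$, $x \mapsto \delta_x$, as its homotopy inverse. Define the retraction $g \colon \vrm{X}{r} \to X$ by $g = \pi \circ f$. By Lemma~\ref{lem:vr-tub} the image of $f$ lies in $\overline{\Tub_r} \subseteq \Tub_\tau$, so this composition makes sense. The linear projection $f$ is $1$-Lipschitz from the $1$-Wasserstein metric to the Euclidean norm: for $\mu = \sum_i \lambda_i \delta_{x_i}$, $\nu = \sum_j \lambda'_j \delta_{y_j}$, and any matching $p$ between them, $f(\mu) - f(\nu) = \sum_{i,j} p_{i,j}(x_i - y_j)$, so $\|f(\mu) - f(\nu)\| \le \cost(p)$. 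Since $\pi$ is continuous by Lemma~\ref{lem:pi-continuous}, $g$ is continuous. As $f(\delta_x) = x$ and $\pi$ restricts to the identity on $X$, the equality $g \circ \iota = \id_X$ holds on the nose.

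For the other direction, I would use the straight-line homotopy $H \colon \vrm{X}{r} \times [0,1] \to \vrm{X}{r}$ defined by
\[
H(\mu, t) = (1-t)\mu + t\,\delta_{g(\mu)},
\]
so that $H(\cdot, 0) = \id$ and $H(\cdot, 1) = \iota \circ g$. The critical point is that $H(\mu, t)$ must actually lie in $\vrm{X}{r}$, that is, its support must span a simplex in $\vr{X}{r}$. Writing $\mu = \sum_i \lambda_i \delta_{x_i}$ with $[x_0, \ldots, x_k] \in \vr{X}{r}$, the support of $H(\mu, t)$ is contained in $\{x_0, \ldots, x_k, \pi(f(\mu))\}$, and Lemma~\ref{lem:vr-simplex} is precisely the statement that this set spans a simplex in $\vr{X}{r}$. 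Hence $H$ is well-defined as a map into $\vrm{X}{r}$.

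The main obstacle I anticipate is verifying continuity of $H$ in the Wasserstein topology. This reduces to two facts: the continuity of $g$ (already argued), and that the convex-combination operation $(\mu, \nu, t) \mapsto (1-t)\mu + t\nu$ is continuous as a map $\cP(X) \times \cP(X) \times [0,1] \to \cP(X)$ in the $1$-Wasserstein metric. The latter can be established by a direct cost-of-transport estimate: for $s \le t$, one routes mass $1-t$ from $\mu_1$ to $\mu_2$, mass $s$ from $\nu_1$ to $\nu_2$, and the remaining mass $t-s$ from $\nu_1$ to $\mu_2$, giving the bound
\[
d_W\bigl((1-t)\mu_1 + t\nu_1,\,(1-s)\mu_2 + s\nu_2\bigr) \le (1-t)\,d_W(\mu_1,\mu_2) + s\,d_W(\nu_1,\nu_2) + (t-s)\,d_W(\nu_1,\mu_2).
\]
Combined with the boundedness of the relevant supports (diameter at most $r$), this yields joint continuity of $H$. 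Hence $\iota$ and $g$ are mutual homotopy inverses, and in fact $g$ is a deformation retraction of $\vrm{X}{r}$ onto $X$.
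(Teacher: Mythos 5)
Your proposal is correct and follows essentially the same route as the paper: the retraction $\pi\circ f$, well-definedness of the linear homotopy via Lemma~\ref{lem:vr-simplex}, and continuity of $f$ and $\pi$ via Lemma~\ref{lem:pi-continuous} and the Lipschitz estimate. The only difference is that you prove directly (and correctly) the $1$-Lipschitz property of $f$ and the Wasserstein-continuity of the convex-combination homotopy, whereas the paper cites Lemmas~5.2 and~3.8 of the metric reconstruction paper for these facts.
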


\begin{figure}[h]
\def\svgwidth{6in}
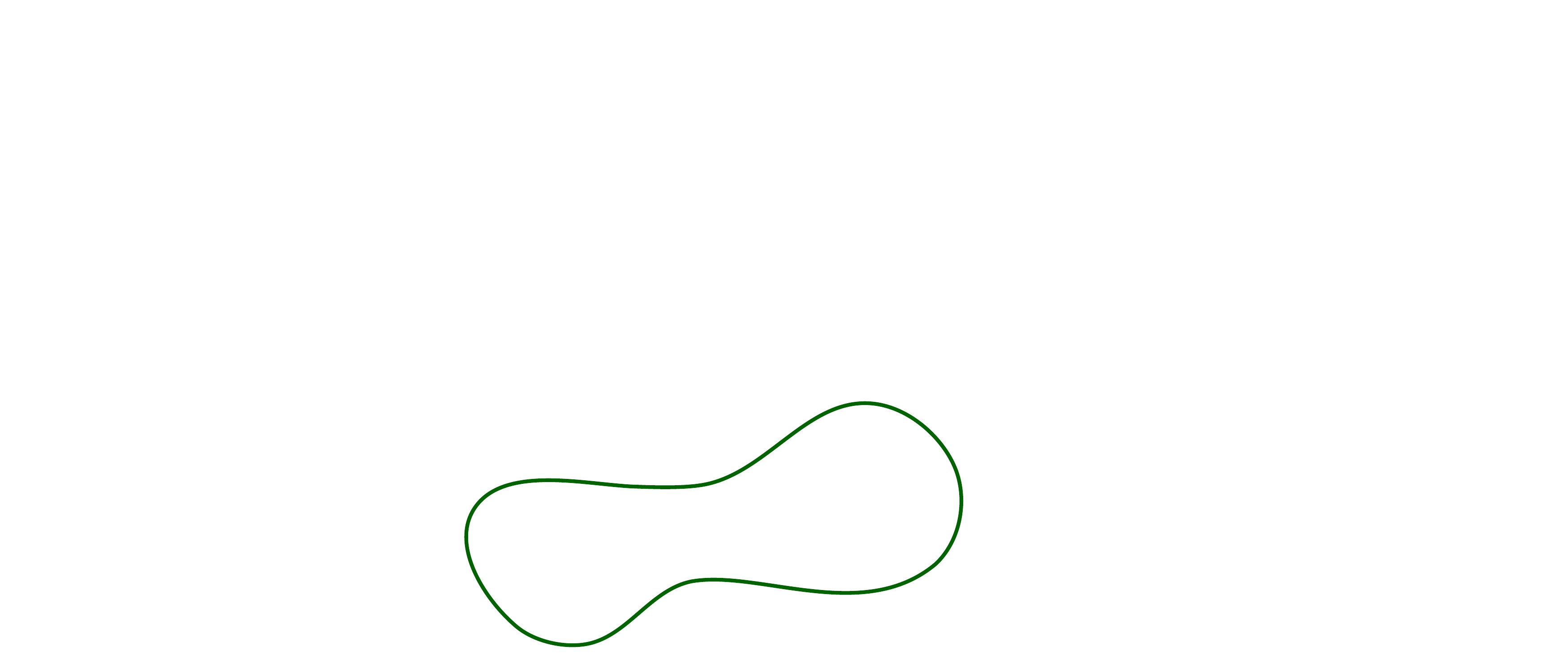
\caption{The homotopy equivalence between $\vrm{X}{r}$ and $X$ in Theorem~\ref{thm:main}.}
\end{figure}

\begin{proof}
By~\cite[Lemma~5.2]{MetricReconstructionViaOptimalTransport}, map $f\colon \vrm{X}{r}\to \R^n$ is $1$-Lipschitz and hence continuous.
It follows from Lemma~\ref{lem:vr-tub} that the image of $f$ is a subset of $\Tub_\tau$.
By Lemma~\ref{lem:pi-continuous} we have that $\pi \colon \Tub_{\tau} \to X$ is continuous.
Let $\iota \colon X \rightarrow \vrm{X}{r}$ be the natural inclusion map.

We will show that $\iota$ and $\pi \circ f$ are homotopy inverses.
Note that $\pi \comp f \comp i = \id_{X}$.
Consider the map $H\colon \vrm{X}{r} \times I \rightarrow \vrm{X}{r}$ defined by $H(x,t) = t\cdot\id_{\vrm{X}{r}} + (1 - t) \iota \comp \pi \comp f$.
Map $H$ is well-defined by Lemma~\ref{lem:vr-simplex}, and continuous by~\cite[Lemma~3.8]{MetricReconstructionViaOptimalTransport} since $\pi \comp f$ is continuous. 
It follows that $H$ is a homotopy equivalence from $\iota \circ \pi \circ f$ to $id_{\vrm{X}{r}}$, and hence $\vrm{X}{r}$ is homotopy equivalent to $X$.
\end{proof}

\section{A metric and Euclidean analogue of the nerve lemma}\label{sec:Euclidean-nerve}

We handle the case of \v{C}ech thickenings in a similar fashion in this section. We write $\cechm{X}{r}$ for either the ambient \v{C}ech complex $\cecham{X}{\R^n}{r}$ or the intrinsic \v{C}ech complex $\cecham{X}{X}{r}$ when the distinction is not important.

\begin{lemma}\label{lem:cech-radius}
Let $[x_0 , \ldots , x_k]$ be a simplex in $\cech{X}{2r}$, where $X\subseteq\R^n$.
Then for any $x \in \conv([x_0 , \ldots , x_k])$, there exists a vertex $x_i$ such that $d(x,x_i) \le r$.
\end{lemma}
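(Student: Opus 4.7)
The plan is to reduce to a halfspace argument and then apply Lemma~\ref{lem:convex}. Since $[x_0,\ldots,x_k]$ is a simplex in $\cech{X}{2r}$, Proposition~\ref{prop:cech-ball} gives a point $c \in Y \subseteq \R^n$ (either $Y=\R^n$ or $Y=X$, but that distinction does not matter here) with $d(c,x_i) \le r$ for every $i$. The goal becomes: for any $x \in \conv\{x_0,\ldots,x_k\}$, at least one vertex $x_j$ satisfies $d(x,x_j) \le r$.

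First I would dispose of the degenerate case $x = c$, for which $d(x,x_i) = d(c,x_i) \le r$ holds for every $i$. Otherwise $x \ne c$, and I would introduce the open halfspace
\[
C = \{\,y \in \R^n \mid \langle y - x,\, x - c \rangle < 0\,\},
\]
which is convex, contains $c$ (since $\langle c-x, x-c\rangle = -\|x-c\|^2 < 0$), and does not contain $x$. Since $x \in \conv\{x_0,\ldots,x_k\}$ and $x \notin C$, Lemma~\ref{lem:convex} yields some vertex $x_j$ with $x_j \notin C$, i.e.\ $\langle x_j - x,\, x - c\rangle \ge 0$.

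The last step is the key one-line computation: expand
\[
\|x_j - c\|^2 \;=\; \|x_j - x\|^2 \;+\; 2\langle x_j - x,\, x - c\rangle \;+\; \|x - c\|^2.
\]
Both $2\langle x_j - x, x-c\rangle$ and $\|x-c\|^2$ are nonnegative, so $\|x_j - x\|^2 \le \|x_j - c\|^2 \le r^2$, giving $d(x,x_j) \le r$ as required.

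There is no real obstacle: the only substantive content is choosing the separating halfspace correctly so that the vertex provided by Lemma~\ref{lem:convex} automatically lies on the far side of $x$ from $c$, which forces $x$ to be at least as close to that vertex as $c$ is.
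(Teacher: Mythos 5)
Your proof is correct and is essentially the paper's argument (which follows De Silva--Ghrist): both identify a vertex $x_j$ with $\langle x_j - x,\, c - x\rangle \le 0$ and then conclude $d(x,x_j)\le d(c,x_j)\le r$ by expanding $\|x_j-c\|^2$. The only cosmetic difference is that you obtain that vertex via the halfspace form of Lemma~\ref{lem:convex}, whereas the paper takes the dot product of the identity $0=\sum_i\lambda_i(x_i-x)$ with $c-x$ directly.
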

\begin{proof}
We follow the proof of~\cite[Lemma~2.9]{DeSilvaGhrist} closely.
By assumption, balls of radius $r$ centered at the points $x_i$ meet at a common point $y$.
Let $x = \sum_i \lambda_i x_i$ be a point in $\conv([x_0 , \ldots , x_k])$.
Rewrite this as $\vec{0} = \lambda_0 \hat{x}_0 + \cdots + \lambda_k \hat{x}_k$ where $\hat{x}_i = x_i - x$.
Also let $\hat{y} = y - x$.
Taking the dot product with $\hat{y}$ gives
\[
   0 = \lambda_0\langle\hat{x}_0,\hat{y}\rangle + \cdots + \lambda_k\langle\hat{x}_k,\hat{y}\rangle.
\]
So for some $i$ we have $\langle\hat{x}_i,\hat{y}\rangle \le 0$.
In that case,
\[
r^2 \ge d(x_i,y)^2 = d(\hat{x}_i,\hat{y})^2 = \|\hat{x}_i\|^2 - 2\langle\hat{x}_i,\hat{y}\rangle + \|\hat{y}\|^2 \ge \|\hat{x}_i\|^2 = \|x_i - x\|^2.
\]
\end{proof}

\begin{lemma}\label{lem:cech-tub}
For $X\subseteq\R^n$ and $r>0$, the map $f\colon \cechm{X}{2r}\to\R^n$ has its image contained in $\overline{\Tub_r}$.
\end{lemma}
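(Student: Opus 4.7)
The plan is to reduce this almost immediately to the previous lemma, Lemma~\ref{lem:cech-radius}, which does all the heavy lifting. Pick an arbitrary element $x = \sum_{i=0}^k \lambda_i x_i \in \cechm{X}{2r}$, so that by definition $[x_0,\ldots,x_k]$ is a simplex of $\cech{X}{2r}$ and the convex-combination image $f(x) = \sum_i \lambda_i x_i$ (interpreted in $\R^n$) lies in $\conv([x_0,\ldots,x_k])$. I would then invoke Lemma~\ref{lem:cech-radius} to conclude that there is some vertex $x_i$ with $d(f(x),x_i) \le r$, and since $x_i \in X$ this gives $d(f(x),X) \le r$, i.e., $f(x) \in \overline{\Tub_r}$.

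Two small bookkeeping remarks: the argument is insensitive to whether $\cechm{X}{2r}$ denotes the ambient thickening $\cecham{X}{\R^n}{2r}$ or the intrinsic one $\cecham{X}{X}{2r}$, since Lemma~\ref{lem:cech-radius} was stated for $\cech{X}{2r}$ without distinguishing the two (and in either case the simplices are finite subsets of $X$, which is all the argument uses). It is also unnecessary to treat the open and closed variants separately, for the same reason. There is essentially no obstacle here --- the work has already been done in Lemma~\ref{lem:cech-radius}, and this lemma is the \v{C}ech analogue of Lemma~\ref{lem:vr-tub}, where the convex-hull diameter bound is simply replaced by the ``covered by a ball of radius $r$ around one vertex'' bound supplied by the preceding lemma.
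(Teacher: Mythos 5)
Your proposal is correct and matches the paper's proof exactly: both observe that $f(x)$ lies in $\conv(\{x_0,\ldots,x_k\})$ and then apply Lemma~\ref{lem:cech-radius} to find a vertex $x_i$ within distance $r$, so $f(x)\in\overline{\Tub_r}$. The bookkeeping remarks about the ambient/intrinsic and open/closed variants are accurate but not needed beyond what the paper already leaves implicit.
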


\begin{proof}
For any point $x=\sum_i\lambda_i x_i\in \cechm{X}{2r}$ we have that $f(x) \in \conv(\{x_0 , \ldots , x_k\})$.
The result then follows form Lemma~\ref{lem:cech-radius}.
\end{proof}

\begin{lemma}\label{lem:cech-simplex-ambient}
Let $X\subseteq\R^n$ have positive reach $\tau$, let $[x_0, \ldots x_k]$ be a simplex in $\cecha{X}{\R^n}{2r}$ with $r < \tau$, let $x = \sum \lambda_i x_i\in\cecham{X}{\R^n}{2r}$, and let $p=\pi(f(x))$.
Then the simplex $[x_0 , \ldots , x_k , p]$ is in $\cecha{X}{\R^n}{2r}$.
\end{lemma}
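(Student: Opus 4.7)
By Proposition~\ref{prop:cech-ball} it suffices to produce a point $c'\in\R^n$ with $\|c'-x_i\|\le r$ for every $i$ and $\|c'-p\|\le r$. My plan is to follow the structure of Lemma~\ref{lem:vr-simplex} as closely as possible. First the trivial case: if $f(x)=p$, then taking $c'$ to be any witness of the \v{C}ech simplex $[x_0,\ldots,x_k]$ works, since $\|c'-p\|=\|c'-f(x)\|\le r$ by $f(x)\in\conv\{x_i\}\subseteq\cball{c'}{r}$. So assume $f(x)\neq p$, set $\hat v=(f(x)-p)/\|f(x)-p\|$ and $c_p=p+\tau\hat v$, and apply Proposition~\ref{prop:empty-ball} to conclude $\ball{c_p}{\tau}\cap X=\emptyset$, so $\|x_i-c_p\|\ge\tau$ for every $i$.

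For the main case, I would argue by contradiction. Let $c$ be the point of the nonempty convex set $\bigcap_i\cball{x_i}{r}$ that minimizes $\|c-p\|$; producing a valid $c'$ is equivalent to showing $\|c-p\|\le r$, so assume $\|c-p\|>r$. Let $T_p^\perp$ denote the line through $p$ and $f(x)$, let $c''$ be the orthogonal projection of $c$ onto $T_p^\perp$, and define the half-space $H_c=\{z\in\R^n\mid\langle z-c'',c-c''\rangle>0\}$ on the side containing $c$. The strategy is to show, in parallel with Lemma~\ref{lem:vr-simplex}, that the empty-ball disjointness $\|x_i-c_p\|\ge\tau$ together with $\|c-x_i\|\le r$ forces $x_i\in H_c$ for every $i$, and to derive a contradiction via Lemma~\ref{lem:convex}: since $f(x)\in T_p^\perp$, the inner product of $f(x)-c''$ with $c-c''$ vanishes, so $f(x)\notin H_c$, yet $f(x)\in\conv\{x_i\}\subseteq H_c$ by convexity.

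The main obstacle is the geometric step forcing $x_i\in H_c$. In Lemma~\ref{lem:vr-simplex} the reference vertex $x_0$ itself lay on $X$, so $\|x_0-c_p\|\ge\tau$, and the key inclusion $H_{x_0}^c\cap\cball{x_0}{r}\subseteq\ball{c_p}{\tau}$ followed from spherical geometry at $x_0$. Here the corresponding reference point $c$ need not lie on $X$ and could a priori lie inside $\ball{c_p}{\tau}$, so the analogous inclusion $H_c^c\cap\cball{c}{r}\subseteq\ball{c_p}{\tau}$ is less immediate. I expect the resolution to bring in the extra data at $c$, namely $\|c-f(x)\|\le r$ (which holds because $f(x)\in\conv\{x_i\}\subseteq\cball{c}{r}$) together with the optimality conditions for the minimizer $c$, in order to locate $c$ correctly relative to $c_p$. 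If the half-space route proves unwieldy, a natural alternative is to bypass the arbitrary minimizer and instead take $c'$ to be the center of the smallest enclosing ball of $\{x_0,\ldots,x_k\}$, then use the empty-ball inequality $\langle x_i-p,\hat v\rangle\le\|x_i-p\|^2/(2\tau)$ to bound $\|c'-p\|$ directly.
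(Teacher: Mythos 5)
Your skeleton matches the paper's: handle $f(x)=p$ trivially, otherwise assume the enlarged simplex fails, introduce the tangent ball $\ball{c_p}{\tau}$ via Proposition~\ref{prop:empty-ball}, and derive a contradiction from Lemma~\ref{lem:convex} by trapping all the $x_i$ in an open half-space bounded by a hyperplane through the line $T_p^\perp$. But the proof is not complete: the step you yourself flag as ``the main obstacle'' --- showing that $\|x_i-c_p\|\ge\tau$ and $\|x_i-c\|\le r$ force $x_i\in H_c$, i.e.\ the inclusion $\cball{c}{r}\setminus\ball{c_p}{\tau}\subseteq H_c$ --- is exactly where the paper's proof does its real work, and you leave it unresolved, offering only a guess at what ingredients might close it and a second, equally unexecuted alternative. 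A proposal that identifies the hard step and defers it is a proposal with a genuine gap. (For what it's worth, your specific worry is slightly misplaced: the paper's reference point $y$, an arbitrary witness of the \v{C}ech simplex, also need not lie on $X$ and also may lie inside the tangent ball; the inclusion only concerns the set difference $\cball{y}{r}\setminus\ball{c_p}{\tau}$, and the paper derives it from the two facts $f(x)\in\cball{y}{r}$ and $p\notin\cball{y}{r}$ --- precisely the facts you have available for your minimizer $c$.)

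There is a second omission. The half-space $H_c$ is only defined when your reference point $c$ does not lie on the line $T_p^\perp$ (otherwise $c''=c$ and there is no normal direction $c-c''$). The paper spends a full paragraph ruling this degeneracy out for its witness $y$, by a three-case analysis along the line; the third case is not elementary and invokes Lemma~\ref{lem:cech-radius} (every point of the convex hull is within $r$ of some vertex) to show that $\cball{y}{r}\setminus\ball{c_p}{\tau}$ would be empty or too far from $f(x)$. Your write-up never addresses the possibility $c\in T_p^\perp$, so even the setup of your contradiction is incomplete. Your choice of $c$ as the point of $\bigcap_i\cball{x_i}{r}$ closest to $p$ is a reasonable variant of the paper's arbitrary witness $y$ (both satisfy $f(x)\in\cball{c}{r}$, and the contradiction hypothesis gives $p\notin\cball{c}{r}$), so the paper's argument should transfer; but as written, the two essential geometric claims are asserted as hopes rather than proved.
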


\begin{proof}
We write the proof for $\cechaleq{X}{\R^n}{2r}$; an analogous proof works for $\cechaless{X}{\R^n}{2r}$. 
Since $[x_0 , \ldots , x_k]$ is a simplex in $\cechaless{X}{\R^n}{2r}$, there exists a ball $\cball{y}{r}$ of radius $r$ centered at some point $y\in\R^n$ such that $x_i \in \cball{y}{r}$ for all $i$.
Also note that $f(x) \in \conv\{x_0,\ldots,x_k\}\subseteq \cball{y}{r}$.
Note $p=\pi(f(x))$ is defined by Lemma~\ref{lem:cech-tub}.
We may assume $p\neq f(x)$, since otherwise the conclusion follows from $f(x)\in \cball{y}{r}$.
Similarly, we know that $d(p,f(x)) \le r$ since $d(x_i,f(x)) \le r$ for some $i$ and since $p$ is the closest point in $X$ to $f(x)$.

Suppose for a contradiction that $p \notin \cball{y}{r}$.
Let $c=p+\tau\frac{f(x)-p}{\|f(x)-p\|}$, and let $\ball{c}{\tau}$ be the open ball with center $c$ and radius $\tau$ that is tangent to $X$ at $p$.
By Proposition~\ref{prop:empty-ball} every $x_i$ must be in $\cball{y}{r} \setminus \ball{c}{\tau}$.
Let $T_p^{\perp}$ be the line through $f(x)$ and $p$.
We claim that $y$ cannot lie on $T_p^{\perp}$.
Indeed if $y$ were on $T_p^\perp$ its location would be limited to one of three line segments --- one with $p$ between $y$ and $f(x)$, one with $y$ between $p$ and $f(x)$, and one with $f(x)$ between $p$ and $y$.
The first cannot occur as we would have $d(p,y) \le d(f(x),y) \le r$.
The second cannot occur as we would have $d(p,y) \le d(p,f(x)) \le r$.
Finally, the third cannot occur because either $d(p,y) < 2\tau - r$ and so the ball $\cball{y}{r}$ is contained in $\ball{c}{\tau}$ and thus cannot contain any vertex $x_i$ in contradiction of the definition of $y$, or $d(p,y) \ge 2\tau - r$, in which case $d(f(x),\cball{y}{r} \setminus \ball{c}{\tau}) > r$ which contradicts Lemma~\ref{lem:cech-radius}.

\begin{figure}[h]
\def\svgwidth{1.8in}
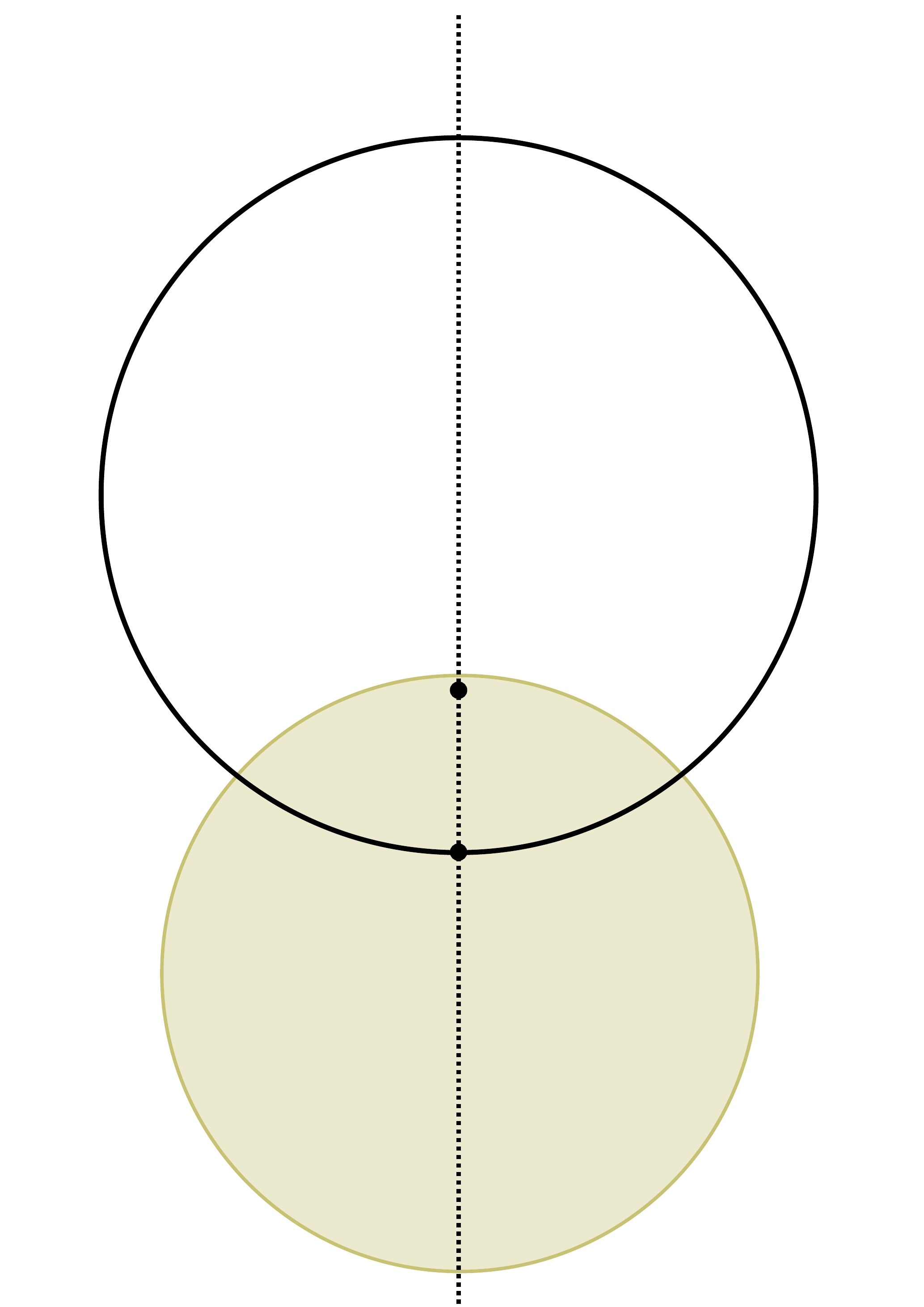
\hspace{5mm}
\def\svgwidth{1.8in}
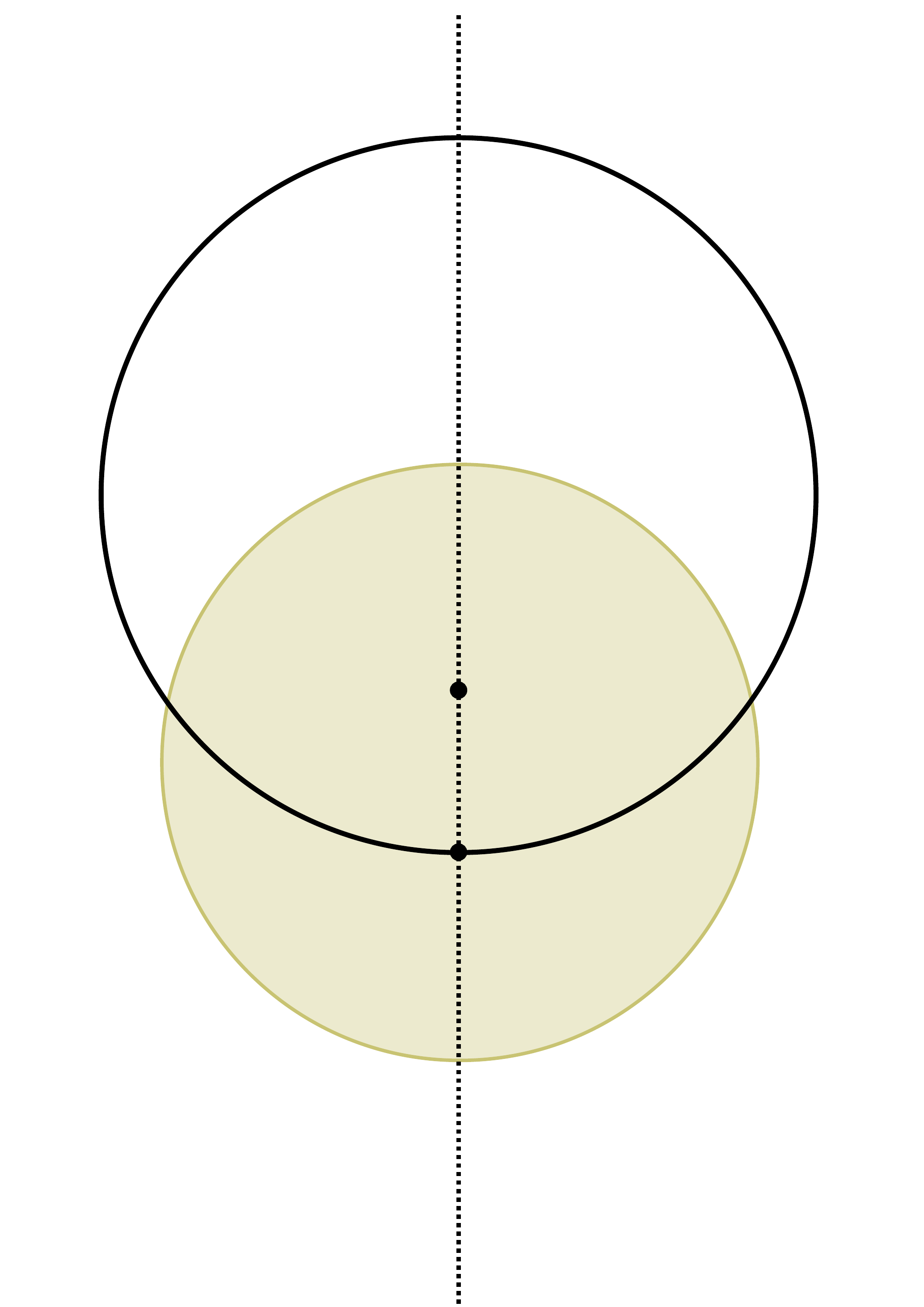
\hspace{5mm}
\def\svgwidth{1.8in}
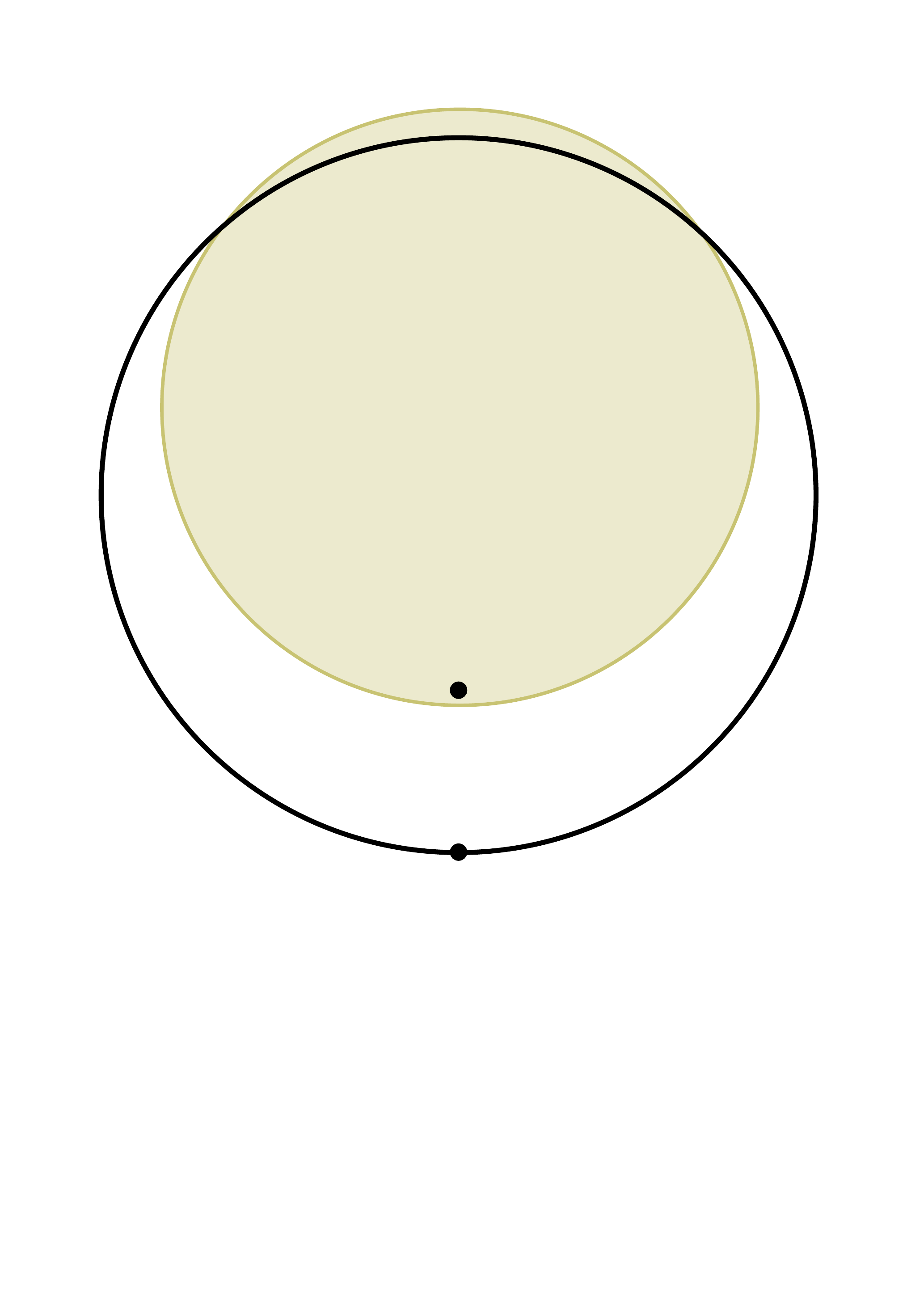
\caption{(\emph{Left}) If $p$ is between $y$ and $f(x)$, then $\cball{y}{r}$ contains $p$.
(\emph{Middle}) If $y$ is between $p$ and $f(x)$ then again $\cball{y}{r}$ contains $p$.
(\emph{Right}) If $f(x)$ is between $y$ and $p$, then the green region $\cball{y}{r} \setminus \ball{c}{\tau}$ is either empty or too far from $f(x)$.}
\end{figure}

\begin{figure}[h]
\def\svgwidth{2in}
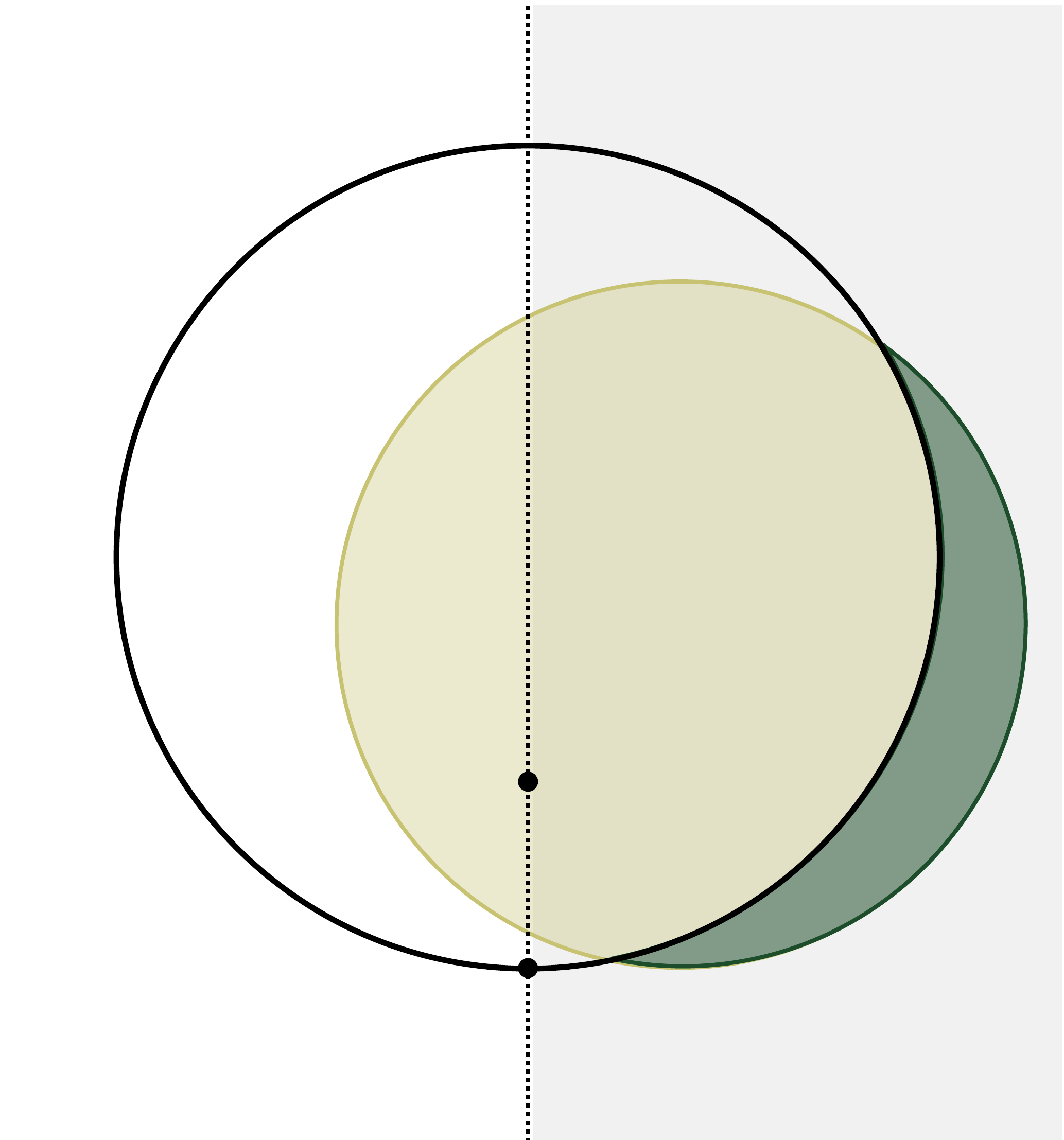
\caption{Figure for the proof of Lemma~\ref{lem:cech-simplex-ambient}. The green region $\cball{y}{r}\setminus\cball{c}{\tau}$ is entirely contained in $H_y$.}
\end{figure}

Let $y'\neq y$ be the closest point on $T_p^\perp$ to $y$.
Let $H_y=\{z\in\R^n~|~\langle z-y',y-y'\rangle>0\}$ be the open half-space containing $y$, whose boundary is the hyperplane containing $T_p^\perp$ that's perpendicular to $y-y'$.
Since $f(x)\in\ball{y}{r}$ and $p\notin\ball{y}{r}$, we have $\cball{y}{r} \setminus \ball{c}{\tau} \subseteq H_y$, which implies $x_i \in H_y$ for all $i$.
This contradicts Lemma~\ref{lem:convex} since $H_y$ is convex with $f(x)\notin H_y$, even though $f(x)\in\conv(\{x_0,\ldots,x_k\})$.
Hence it must be the case that $p \in \cball{y}{r}$, and so $[x_0 , \ldots , x_k , p ]$ is a simplex in $\cechaleq{X}{\R^n}{2r}$.
\end{proof}

\FloatBarrier

An analogous lemma holds for the intrinsic \v{C}ech complex.

\begin{lemma}\label{lem:cech-simplex-intrinsic}
Let $X\subseteq\R^n$ have positive reach $\tau$, let $[x_0, \ldots x_k]$ be a simplex in $\cecha{X}{X}{2r}$ with $r < \tau$, let $x = \sum \lambda_i x_i\in\cecham{X}{X}{2r}$, and let $p=\pi(f(x))$.
Then the simplex $[x_0 , \ldots , x_k , p]$ is in $\cecha{X}{X}{2r}$.
\end{lemma}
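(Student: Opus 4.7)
The plan is to observe that the argument of Lemma~\ref{lem:cech-simplex-ambient} carries over essentially verbatim to the intrinsic setting, together with one extra bit of bookkeeping about where the witness point lives. The key point is that the intrinsic Čech condition $[x_0,\ldots,x_k]\in\cecha{X}{X}{2r}$ is strictly stronger than the ambient one: by Proposition~\ref{prop:cech-ball} it guarantees the existence of a witness $y\in X$ (rather than merely $y\in\R^n$) satisfying $d(y,x_i)\le r$ for every $i$. Because the metric on $X$ is the restriction of the Euclidean metric, this same $y$ is automatically a Euclidean witness, so $[x_0,\ldots,x_k]\in\cecha{X}{\R^n}{2r}$ as well, and we can run any argument that only uses $y$ as a point of $\R^n$.

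First I would handle trivial cases: $p=f(x)$ (conclusion follows from $f(x)\in \cball{y}{r}$) and $p=y$ (conclusion is immediate). Otherwise, fix an intrinsic witness $y\in X$ for the original simplex. The point $p=\pi(f(x))$ is defined by Lemma~\ref{lem:cech-tub}, and by the same argument as in Lemma~\ref{lem:cech-simplex-ambient} we have $d(p,f(x))\le r$.

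Then I would reproduce the proof of Lemma~\ref{lem:cech-simplex-ambient} word for word: assume for contradiction $p\notin \cball{y}{r}$; put $c=p+\tau\frac{f(x)-p}{\|f(x)-p\|}$, so by Proposition~\ref{prop:empty-ball} the open ball $\ball{c}{\tau}$ is tangent to $X$ at $p$ and disjoint from $X$; hence every $x_i$ lies in $\cball{y}{r}\setminus \ball{c}{\tau}$. Rule out $y\in T_p^\perp$ by the same three-segment trichotomy (the first two are impossible by $d(y,p)\le r$ or $d(y,f(x))\le r$, and the third gives $\cball{y}{r}\setminus\ball{c}{\tau}$ either empty or too far from $f(x)$ to satisfy Lemma~\ref{lem:cech-radius}). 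With $y\notin T_p^\perp$, let $y'\neq y$ be the closest point on $T_p^\perp$ to $y$ and form the open half-space $H_y=\{z\in\R^n\mid \langle z-y',y-y'\rangle>0\}$; since $f(x)\in \ball{y}{r}$ lies on $T_p^\perp$ while $p\notin \ball{y}{r}$, the region $\cball{y}{r}\setminus \ball{c}{\tau}$ is contained in $H_y$, forcing every $x_i\in H_y$ while $f(x)\notin H_y$. This contradicts Lemma~\ref{lem:convex}.

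The conclusion $d(y,p)\le r$, combined with $y\in X$ and $p=\pi(f(x))\in X$, shows that $y$ itself is a valid intrinsic witness for the enlarged set, establishing $[x_0,\ldots,x_k,p]\in\cecha{X}{X}{2r}$. I do not anticipate any real obstacle: the only thing to check is that no step of the ambient proof secretly requires $y\notin X$, and inspecting the argument shows that every geometric relation invoked (the tangent ball, the half-space, the trichotomy on $T_p^\perp$) uses $y$ only as a point of $\R^n$ satisfying the witness inequalities, which our $y\in X$ does.
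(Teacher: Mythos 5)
Your proposal is correct and follows essentially the same route as the paper: fix an intrinsic witness $y\in X$, note that the intrinsic condition is stronger than the ambient one, assume $p\notin\cball{y}{r}$, and derive a contradiction via the tangent ball $\ball{c}{\tau}$ and the half-space $H_y$, finally observing that $y\in X$ and $d(y,p)\le r$ make $y$ an intrinsic witness for the enlarged simplex. The only difference is in one sub-step: where you re-run the ambient three-segment trichotomy to exclude $y\in T_p^\perp$, the paper exploits $y\in X$ directly (either $y=p$, contradicting $p\notin\cball{y}{r}$, or $y\notin\ball{c}{\tau}$, which on that line forces $d(y,f(x))$ too large to have $f(x)\in\cball{y}{r}$); your longer argument is equally valid since none of its three cases ever uses $y\notin X$.
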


\begin{proof}
As in the ambient case, we write the proof for $\cechaleq{X}{X}{2r}$; an analogous proof works for $\cechaless{X}{X}{2r}$.

Since $[x_0 , \ldots , x_k]$ is a simplex in $\cecha{X}{X}{2r}$, there exists a ball $\cball{y}{r}$ of radius $r$ centered at some point $y\in X$ such that $x_i \in \cball{y}{r} \cap X$ for all $i$.
Also note that $f(x) \in \conv\{x_0,\ldots,x_k\}\subseteq \cball{y}{r}$, and again $p=\pi(f(x))$ is well-defined by Lemma~\ref{lem:cech-tub}.
We may assume $p\neq f(x)$, since otherwise the conclusion follows trivially because $p \in X$ and $f(x)\in \cball{y}{r}$, so we would have $p \in \cball{y}{r} \cap X$.
Also, we know that $d(p,f(x)) < r$ since $d(x_i,f(x)) \le r$ for some $i$ and since $p$ is the closest point in $X$ to $f(x)$.

Suppose for a contradiction that $p \notin \cball{y}{r}$.
Let $c=p+\tau\frac{f(x)-p}{\|f(x)-p\|}$, and let $\ball{c}{\tau}$ be the open ball with center $c$ and radius $\tau$ that is tangent to $X$ at $p$.
By Proposition~\ref{prop:empty-ball} every $x_i$ must be in $\cball{y}{r} \setminus \ball{c}{\tau}$.
As above, let $T_p^{\perp}$ be the line through $f(x)$ and $p$.

We now claim that $y$ cannot lie on $T_p^{\perp}$.
Indeed, since $y \in X$, we would have either $y = p$ contradicting $p \notin \cball{y}{r}$, or $d(y,f(x)) > \tau$ because $y \notin \ball{c}{\tau}$, contradicting $f(x) \in \cball{y}{r}$.

Let $y'\neq y$ be the closest point on $T_p^\perp$ to $y$.
Let $H_y=\{z\in\R^n~|~\langle z-y',y-y'\rangle>0\}$ be the open half-space containing $y$, whose boundary is the hyperplane containing $T_p^\perp$ that's perpendicular to $y-y'$.
Since $f(x)\in\ball{y}{r}$ and $p\notin\ball{y}{r}$, we have $\cball{y}{r} \setminus \ball{c}{\tau} \subseteq H_y$, which implies $x_i \in H_y$ for all $i$.
This contradicts Lemma~\ref{lem:convex} since $H_y$ is convex with $f(x)\notin H_y$, even though $f(x)\in\conv(\{x_0,\ldots,x_k\})$.
Hence it must be the case that $p \in \cball{y}{r} \cap {X}$, and so $[x_0 , \ldots , x_k , p ]$ is a simplex in $\cecha{X}{X}{2r}$.
\end{proof}

The following result is related to the nerve lemma, but it is not a consequence thereof. Indeed, even though the \v{C}ech simplicial complex $\cech{X}{2r}$ is the nerve of a collection of balls, the metric \v{C}ech thickening $\cechm{X}{2r}$ in general need not be homeomorphic nor even homotopy equivalent to the nerve $\cech{X}{2r}$. We reiterate that this theorem is for both the ambient and intrinsic \v{C}ech complexes, $\cecham{X}{\R^n}{2r}$ and $\cecham{X}{X}{2r}$.

\begin{theorem}\label{thm:cech}
Let $X$ be a subset of Euclidean space $\R^n$, equipped with the Euclidean metric, and suppose the reach $\tau$ of $X$ is positive. Then for all $r < \tau$, the metric \v{C}ech thickening $\cechm{X}{2r}$ is homotopy equivalent to $X$.
\end{theorem}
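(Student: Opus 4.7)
The plan is to mirror the proof of Theorem~\ref{thm:main} almost verbatim, substituting the \v{C}ech-specific lemmas (Lemmas~\ref{lem:cech-radius}, \ref{lem:cech-tub}, \ref{lem:cech-simplex-ambient}, \ref{lem:cech-simplex-intrinsic}) for their Vietoris--Rips counterparts. The three maps I will work with are the linear projection $f\colon \cechm{X}{2r}\to\R^n$, the nearest-point projection $\pi\colon \Tub_\tau\to X$, and the canonical inclusion $\iota\colon X\to \cechm{X}{2r}$. The goal is to exhibit $\iota$ and $\pi\comp f$ as homotopy inverses of one another, with both the ambient case $\cecham{X}{\R^n}{2r}$ and the intrinsic case $\cecham{X}{X}{2r}$ treated uniformly since Lemmas~\ref{lem:cech-simplex-ambient} and \ref{lem:cech-simplex-intrinsic} produce the same key conclusion.

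First I would verify that $\pi\comp f$ is well-defined and continuous. Lemma~\ref{lem:cech-tub} places the image of $f$ inside $\overline{\Tub_r}\subseteq \Tub_\tau$, so $\pi$ may be post-composed with $f$. Continuity of $f$ follows from \cite[Lemma~5.2]{MetricReconstructionViaOptimalTransport} (the linear projection is $1$-Lipschitz with respect to the $1$-Wasserstein metric), and continuity of $\pi$ is Lemma~\ref{lem:pi-continuous}. The identity $\pi\comp f\comp \iota = \id_X$ is immediate, because for $x\in X$ one has $f(\iota(x))=x\in X$ and hence $\pi(x)=x$.

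For the other composition, I would build the straight-line homotopy $H\colon \cechm{X}{2r}\times I\to \cechm{X}{2r}$ by $H(x,t) = t\cdot \id_{\cechm{X}{2r}}(x) + (1-t)\cdot (\iota\comp \pi\comp f)(x)$. The key point is that $H$ actually lands in $\cechm{X}{2r}$: writing $x=\sum_i \lambda_i x_i$ for a simplex $[x_0,\ldots,x_k]$ in $\cech{X}{2r}$ and setting $p=\pi(f(x))$, Lemma~\ref{lem:cech-simplex-ambient} (respectively Lemma~\ref{lem:cech-simplex-intrinsic}) asserts that $[x_0,\ldots,x_k,p]$ is still a simplex of $\cech{X}{2r}$, so any convex combination of $x$ with $\iota\comp \pi\comp f(x)=p$ sits in the thickening. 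Continuity of $H$ as a map into $\cechm{X}{2r}$ follows from \cite[Lemma~3.8]{MetricReconstructionViaOptimalTransport}, given continuity of $\pi\comp f$. Then $H$ provides the homotopy from $\iota\comp\pi\comp f$ to $\id_{\cechm{X}{2r}}$, and combined with $\pi\comp f\comp\iota=\id_X$ this yields the homotopy equivalence $\cechm{X}{2r}\simeq X$.

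The main obstacle has already been overcome in Lemmas~\ref{lem:cech-simplex-ambient} and \ref{lem:cech-simplex-intrinsic}, whose case analyses on the position of the ball center $y$ relative to the line $T_p^\perp$ are the geometric heart of the argument; once those are in hand the theorem is a formal consequence organized exactly as in the proof of Theorem~\ref{thm:main}. One small sanity check worth making is that the inclusion $\overline{\Tub_r}\subseteq \Tub_\tau$ really does use the strict inequality $r<\tau$, which it does, so the nearest-point projection $\pi$ is defined on the entire image of $f$.
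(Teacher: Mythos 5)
Your proposal is correct and follows essentially the same route as the paper's own proof: the same three maps $f$, $\pi$, $\iota$, the identity $\pi\comp f\comp\iota=\id_X$, and the linear homotopy $H$ whose well-definedness is supplied by Lemma~\ref{lem:cech-simplex-ambient} or Lemma~\ref{lem:cech-simplex-intrinsic} and whose continuity follows from \cite[Lemma~3.8]{MetricReconstructionViaOptimalTransport}. No gaps.
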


\begin{proof}
We follow the same outline as for the Vietoris--Rips case.
Map $f\colon \cechm{X}{2r}\to \R^n$ is again continuous by~\cite[Lemma~5.2]{MetricReconstructionViaOptimalTransport}.
It follows from Lemma~\ref{lem:cech-tub} that the image of $f$ is a subset of $\Tub_\tau$.
By Lemma~\ref{lem:pi-continuous} we have that $\pi \colon \Tub_{\tau} \to X$ is continuous, and let $\iota \colon X \rightarrow \vrm{X}{r}$ be the natural inclusion map.

We will show that $\iota$ and $\pi \circ f$ are homotopy inverses.
Note that $\pi \comp f \comp i = \id_{X}$.
The continuous map $H \colon \cechm{X}{2r} \times I \to \cechm{X}{2r}$ given by $H(x,t) = t\cdot\id_{\cechm{X}{2r}}+(1-t)\iota\comp\pi\comp f$ is well-defined by Lemma~\ref{lem:cech-simplex-ambient} or \ref{lem:cech-simplex-intrinsic} and is the necessary homotopy equivalence from $\iota \circ \pi \circ f$ to $id_{\cechm{X}{2r}}$. Hence $\cechm{X}{2r}$ is homotopy equivalent to $X$.
\end{proof}

In the case of the metric \v{C}ech thickening, the bound $r < 2\tau$ is tight.
For example, consider the zero sphere $\sphere^0 = \{-1,1\} \subseteq \R$.
The reach of $\sphere^0$ is $\tau = 1$.
At scale parameter $r = 2$ we have that $\cechamleq{\sphere^0}{\R}{2} \cong [-1,1]$ is contractible, and hence not homotopy equivalent to $\sphere^0$.

\section{Conclusion}\label{sec:conclusion}
Subsets of Euclidean space of positive reach are a class of objects of particular interest in topological data analysis, and in this paper we have shown that Vietoris--Rips and \v{C}ech thickenings of these spaces recover the same topological information as the space itself.
Moreover, metric Vietoris--Rips and \v{C}ech thickenings retain metric information about the subset, in stark contrast with the classical Vietoris--Rips and \v{C}ech simplicial complexes, which in general are not metrizable.
Furthermore, metric thickenings have the advantage of allowing simpler (and explicit) constructions of the maps realizing homotopy equivalences in analogues of Hausmann's result and the nerve lemma.
Several questions, however, remain open.
In particular, Latschev's theorem~\cite{Latschev} states that if $Y$ is Gromov--Hausdorff close to a manifold $X$, then an appropriate Vietoris--Rips complex of $Y$ is homotopy equivalent to the manifold. A metric analogue for Vietoris--Rips thickenings is currently known only when $Y$ is finite (\cite[Theorem~4.4]{MetricReconstructionViaOptimalTransport}), even though we expect the result to also be true for infinite $Y$.

\bibliographystyle{plain}
\bibliography{MetricThickeningsOfEuclideanSubmanifolds}

\end{document}